
\documentclass[12pt]{article}%
\usepackage{amsmath}
\usepackage{amsfonts}
\usepackage{amssymb}
\usepackage{graphicx}%
\setcounter{MaxMatrixCols}{30}

\setlength{\oddsidemargin}{0.25in}
\setlength{\evensidemargin}{0.25in} \setlength{\headheight}{0.2in}
\setlength{\topmargin}{-0.6in} \setlength{\textheight}{9in}
\setlength{\textwidth}{6in} \setlength{\parskip}{0in}

\providecommand{\U}[1]{\protect\rule{.1in}{.1in}}
\providecommand{\U}[1]{\protect\rule{.1in}{.1in}}
\providecommand{\U}[1]{\protect\rule{.1in}{.1in}}
\newtheorem{theorem}{Theorem}

\newtheorem{definition}[theorem]{Definition}

\newtheorem{lemma}[theorem]{Lemma}

\newtheorem{proposition}[theorem]{Proposition}

\newenvironment{proof}[1][Proof]{\noindent\textbf{#1.} }{\ \rule{0.5em}{0.5em}}
\begin{document}

\title{\textbf{Equilibrium policies when preferences are time inconsistent }}
\author{\textbf{Ivar Ekeland and Ali Lazrak\thanks{Ivar Ekeland is Canada Research Chair in
Mathematical Economics at the University of British Columbia
(ekeland@math.ubc.ca), Ali Lazrak is with the Sauder School of
Business at the University of British Columbia
(ali.lazrak@sauder.ubc.ca). We thank Graig Evans, Larry Karp, Jacob
Sagi and seminar participants at UBC, THEMA, Institut Henri
Poincar\'e, Society of Economic Dynamic (Vancouver 2006), CMS
(Winnipeg), The UBC annual finance conference (Whistler), the workshops ``optimization problems in financial
economics" (Banff) and, ``Risk: individual and collective decision
making" (Paris).}}}
\date{First version: June 2005\\ This version: August 2008}
\maketitle

\begin{abstract}
This paper characterizes differentiable and subgame Markov perfect
equilibria in a continuous time intertemporal decision problem with
non-constant discounting. Capturing the idea of non commitment by
letting the commitment period being infinitesimally small,  we
characterize the equilibrium strategies by a value function, which
must satisfy a certain equation. The equilibrium equation is
reminiscent of the classical Hamilton-Jacobi-Bellman equation of
optimal control, but with a non-local term leading to differences in
qualitative behavior. As an application, we formulate an overlapping
generations Ramsey model where the government maximizes a
utilitarian welfare function defined as the discounted sum of
successive generations' lifetime utilities. When the social discount
rate is different from the private discount rate, the optimal
command allocation is time inconsistent and we retain subgame
perfection as a principle of intergenerational equity. Existence of
multiple subgame perfect equilibria is established. The multiplicity
is due to the successive governments' inability to coordinate their
beliefs and we single out one of them as (locally)
renegotiation-proof. Decentralization can be achieved with both age
and time dependent lump sum transfers and, long term distorting
capital interest income taxes/subsidy.
\end{abstract}

\clearpage

\section{Introduction}

Time inconsistency is present in many dynamic decision making
problems\footnote{In particular the issue is of economic relevance
in the monetary and fiscal policy design of a benevolent government
(Kydland and Prescot \cite{Kyd}, Calvo \cite{Calvo}, Fischer
\cite{Fischer} and, Chari and Kehoe \cite{ChariKehoe}), the pricing
of a durable good for a monopolist (Stokey \cite{Sto}), the
ownership policy for a large shareholder (DeMarzo and
Uro\v{s}evi\'{c} \cite{DeMa}), the long term environmental decision
making (Chichilnisky \cite{Chichi} and, Li and L\"{o}fgren
\cite{LiLo}) and, the consumption saving private decision under
hyperbolic discount (Laibson \cite{Laibson1}) as well as the Ramsey
growth model when the social planner himself exhibits hyperbolic
discount (Krusell, Kuru\c{s}\c{c}u and Smith \cite{Krusell-Smith
0}).}. The primary objective of this paper is to develop a new
approach to analyze a class of time inconsistent games where time
inconsistency is due to non constant discount rates. Using this
methodology, we characterize the equilibria of the game and, for a
particular specification of the discount function, we establish
their existence and report a new type of
indeterminacy for this class of games. 
 The framework is the standard deterministic and stationary
Ramsey general equilibrium model of growth and capital accumulation
(see \cite{Ramsey}). Time inconsistency is due to the planner's
non-constant discount rates and there are several good reasons for
which this may occur. First, when the planner is an individual,
there is experimental evidence from psychology (e.g. Ainslie
\cite{Anslie1}) which challenges the assumption of constant discount
rates. In particular, there is robust evidence that people can
indulge in immediate gratification even if the delayed cost is high.
This suggest a revealed discount rate which is declining over time
which results in a \emph{hyperbolic} discount function. Second, if
the planner is a government, utilitarianism naturally leads to non
constant discounting and time inconsistency in models with multiple
generations. The source of time inconsistency is the inability of
the forward looking governments to account for the preferences of
the deceased cohorts.\footnote{A time inconsistency induced
by the structure of the preferences is also present for capital accumulation
models with non overlapping and altruistic generations when the
social planner uses the maxi-min Rawlasian criterion (Dasgupta
\cite{Dasgupta}). A similar situation also arises for consumption saving problems
when the preferences over consumption streams are non
additively separable (Kihlstrom \cite{ Kihlstrom}).} The idea is
intuitively discussed in the context of natural ressources
management by Sumaila and Walters \cite{Sumaila} and, explained more
formally in the optimal growth context in a model with altruistic
and non-overlapping generations (Bernheim \cite{Bernheim}) and, in a
model with non altruistic and overlapping generations (Calvo and
Obstfeld \cite{CalvoObstfeld}).

Following the previous literature on the topic, Strotz
\cite{Strotz}, Pollak \cite{Pollak}, Phelps and Pollak
\cite{Phelps-Pollak} and, Peleg and Yaari \cite{Peleg-Yaari}, we
search for subgame perfect equilibria of a dynamic game where
decision makers cannot bind their choices (non commitment) and are
aware of their inconsistency problem\footnote{See Gul and
Pesendorfer \cite{Gul1} for an alternative interpretation of the
Strotz's model.}. We restrict the planner to choose  pure strategies
that are continuously differentiable functions of contemporaneous
capital stock, making the framework as close as possible to the
standard optimal growth model. The notion of non-commitment is not
easily defined in continuous time: there is no notion of
``successor" because no matter how close in time two planners are,
there is always a third planner who precedes one of them and
succeeds the other. As a result, the discrete time induction
approach cannot be used to solve for the equilibrium and we have to
adapt it to define the concept of equilibrium in continuous time.
This is achieved with an intuitive construction: we assume that the
planner at any point in time can control his immediate successors,
thereby forming a coalition of a given small size that clearly
separates the current planner from the more distant ones. A strategy
is an equilibrium if it is the best strategy for the current planner
when the coalition is vanishingly small, so long as the same
strategy is expected to be used by the distant planners. The idea
conceptually parallels Aumann's continuum of consumers \cite{Aumann}
and it simply reduces to the Barro's \cite{Barro} approach when
linear equilibria exist.\footnote{See also Phelps \cite{Phelps} for
an early discussion on the issue.} In the first part of this paper
(Sections $2$, $3$ and $4$) we provide a precise sense in which the
game can be formulated and, a characterization of equilibrium
policies. The characterization consists of an instantaneous
saving-consumption indifference condition together with a non linear
differential equation displaying a non local term (that is a term
which depends on the global behavior of the solution). The non local
term reflects the strategic motive of investing and, when the
discount rate is constant it becomes a local term and the equation
reduces to the familiar Hamilton-Jacobi-Bellman equation (HJB) of
optimal control. The equation extends the Generalized Euler Equation
of Harris and Laibson \cite{Harris-Laibson} concerning the case where the
planner is facing a countable number of successors to a case where
the planner is facing a continuum of successors. Notice that an alternative
approach could have been adopted for our class of continuous time
dynamic games. It is possible to consider first a discrete time game
on a time grid, define the equilibrium by using induction methods on
the grid, and consider the equilibrium policy resulting from letting
the gird be vanishingly fine. This approach has been successfully
adopted by Luttmer and Mariotti \cite{Luttmer-Mariotti} for linear
equilibria of consumption-saving games under
uncertainty.\footnote{Simon and Stinchcombe \cite{Simon} also used
the infinitesimal grid approach to define a more general class of
games and history dependent strategies in continuous time.} Indeed
the linear equilibria of Luttmer and Mariotti
\cite{Luttmer-Mariotti} are mutually consistent with ours in a
deterministic version of their model.  We did not pursue this path
here because the approach that we undertook uses simple marginal
calculations involving standard differential calculus techniques and
does not require an extraneous convergence theorem. More
importantly, it directly leads us to an equilibrium characterization
that mirrors the HJB equation of optimal control which suggests that
our method is the ``adequate" notion of dynamic games in continuous
time.\footnote{Given the convergence result of Luttmer and Mariotti
\cite{Luttmer-Mariotti} for discrete time linear equilibria to our
solution, there is a suspicion that the convergence holds for non
linear equilibria as well.}

What appears to be new in this first part of the paper (Sections
$2$, $3$ and $4$) relative, for instance, to Barro \cite{Barro} and,
Luttmer and Mariotti \cite{Luttmer-Mariotti} is the formal model of
the game in continuous time and the resulting general
characterization of equilibrium policies. The framework allows for
non linear policies and the equilibrium characterization addresses
directly the Pollak's criticism of Strotz's work (see section $4$ of
Pollak \cite{Pollak}). More importantly,  we believe the general
methodology that we develop here can be applied in a broad set of
time inconsistent problems including the problem of optimal fiscal
policy. To illustrate the usefulness of the methodology, we develop
an application in the context of the Blanchard \cite{Blanchard}
model of economic growth with finite life (the perpetual youth
model). The Blanchard model is a continuous time version of the
overlapping-generations models of Samuelson \cite{Samuelson} and
Diamond \cite{Diamond} offering more tractability for the
aggregation of individual variables. We explore the dynamic
allocation problem of a utilitarian government (the planner)
maximizing the discounted sum of the surviving and unborn
generations' lifetime utilities. As we mentioned earlier,
implementation of utilitarian welfare optima in overlapping
generation models is problematic. Except in few special cases,
utilitarianism renders the planner's objective time inconsistent.
Time consistency is achieved for example when the planner's discount
rate is equal to the individual discount rate or if the planner's
discount function satisfies some unnatural assumptions (Calvo and
Obstfeld \cite{CalvoObstfeld}). The purpose of our application is to
analyze the centrally planned allocation when the social discount
function is aligned with the way in which economists usually model
utilitarianism that is, an exponential forward looking discount
function with a constant rate of growth. The optimal command is then
time inconsistent and the planner is forced to approach the
allocation decision as a strategic problem. Our principle of
intergenerational equity is then anchored in the concept of subgame
perfection of  the extensive form game of successive planners. This
approach has already been taken for Rawlasian welfare (see Dasgupta
\cite{Dasgupta}, Lane and Mitra \cite{LaneMitra} and, Asheim
\cite{Asheim2}). To our knowledge, our application is the first to
address this preference based time inconsistency friction in an
overlapping generations economy with a utilitarian government.

We now summarize the main finding from our application (section 5)
that we carry out in two steps.

First, we describe how the centrally planned economy operates.  The
government has to solve two questions: how to allocate consumption
across the surviving cohorts and how to allocate aggregate
consumption over time. The inspection of this dual decision shows
that when the allocation across surviving generations is restricted
to be stationary and linear in aggregate consumption, the problem of
allocating the aggregate consumption over time is in fact isomorphic to an
infinitively lived representative agent growth model with non
constant discount of the type analyzed in the first part of the
paper. The social discount function takes the form of a mixture of
two exponential functions. Due to this special structure we can
reduce the equilibrium characterization from the first part of the
paper to a system of two coupled ordinary differential equations
which do not involve non local terms. Taking advantage of this
special structure and using the central manifold theorem (Carr
\cite{Carr}), we then prove the existence of multiple equilibria.
The equilibrium policies are continuous and differentiable in the
capital stock and they inherit the smoothness (in a sense to be made
precise) from the underlying production function. Equilibrium
multiplicity results in a continuum of possible steady state level
of capital stock within the range of an open and bounded interval.
When the planner's discount rate is equal to the individual discount
rate, the interval shrinks to one point, and the capital stock
converges to its modified golden rule level. The driving factor for
multiplicity is the governments' inability to coordinate their
expectations on any policy. When adapted to our continuous time
framework, renegotiation-proofness is restrictive as in Kocherlakota
\cite{Kocherlakota}. Under this refinement, all successive
governments use in agreement, amongst all equilibrium policies, the
one that induces the capital stock to converge to the highest steady
state level. The renegotiation-proof steady state level of capital
stock depends on the government discount rate, the private discount
rate and the individuals' life expectancy. When the government
discount rate converges to $0$, the capital stock path resulting
from the renegotiation-proof equilibrium converges to its golden
rule level.

Second, we discuss how to find a tax schedule that, in a market with
actuarially fair annuities (Blanchard \cite{Blanchard}) places the
economy on the desired disaggregate path of accumulation. If the
social discount rate is equal to the private discount rate,
distortionary taxation is not required by the second welfare
theorem. If, as we suppose in our application the government is more
patient than the individuals,\footnote{See Caplin and Leahy
\cite{Caplin-Leahy1} for a non paternalistic argument supporting the
idea that social planner can be more patient than private
individuals.} the time inconsistency problem creates a wedge between
the cost and benefits of saving at the individual level. Under this
assumption, the laissez-faire economy cannot decentralize the
allocation and we show that such decentralization can be achieved if
the government uses a date and age conditioned lump sum taxation and
distorting capital income taxation. We provide a closed-form expression
for the required path of capital income tax rates that shows that
its long term level can be positive or negative. The result is
clearest in one specification of the model where the the consumption
allocation across the surviving cohorts is egalitarian. Under this
specification, as in the infinitely lived identical agents economy,
there is no heterogeneity in consumption at any point in time. Yet
the government finds it optimal to subsidize the capital income with
an age independent rate, including in the steady state. The motive
for the subsidy is to give the savings incentives to the private
agents who, from the point of view of the government, are not saving
sufficiently. The qualitative conclusion from our application is
that the preference based time inconsistency friction faced by a
utilitarian government creates in its own a role for capital
taxation in the long term. The results is in stark contrast with the
benchmark infinitively lived agents economy where capital income
taxation should be zero in the long term (Judd \cite{Judd} and
Chamley \cite{Chamley}).

{\bf Related Literature.} Before turning to the model, we summarize
how our work relates to the literature on which it builds. Our main
result is the existence of multiple equilibria in a dynamic game. We
introduce a definition of continuous time games and we prove
existence by using a new approach, based on the central manifold
theorem (Carr \cite{Carr}), in a deterministic Ramsey growth model.
Our paper naturally relates to the game theoretic literature on
Markov-consistent plan (MCP). An MCP is a subgame perfect
equilibrium of the extensive form game between the successive
planners in which the strategy is pure and only depends on the
payoff relevant variables (capital stock). the MCPs do not always
exist in time inconsistent decision making problems. Peleg and Yaari
\cite{Peleg-Yaari} constructed a simple finite horizon
intra-personal game where MCPs do not exist.\footnote{However, more
general history dependent equilibrium strategies still exist as
Goldman \cite{Goldman} showed in a finite horizon setting and Harris
\cite{Harris} showed in an infinite horizon setting} Linear MCPs
have however been reported in the hyperbolic discount
literature\footnote{This literature shows that apparent
irrationality of individuals, even in financial markets, can be
ascribed to the fact that the psychological discount factor is not
exponential; see Laibson \cite{Laibson2}, Harris and Laibson
\cite{Harris-Laibson1}, Diamond and Koszegi \cite{Diamond-Koszegi}
and others.} in a frictionless consumption saving problems with
homothetic time additive utilities and linear production functions.
In a deterministic setting, linear MCPs have been reported in
Laibson \cite{Laibson0} in a discrete time consumption saving
problem as well as in Barro \cite{Barro} in the context of an
infinite horizon continuous time decentralized version of the Ramsey
growth model. Luttmer and Mariotti \cite{Luttmer-Mariotti} also
reported linear MCPs in an infinite horizon endowment asset pricing
model with uncertainty. The aggregate temporal allocation problem of
our overlapping generations growth model may be interpreted as a
consumption saving problem with hyperbolic discount and a non linear
state dynamic for wealth. With this interpretation, our application
can be seen as a an extension of Laibson \cite{Laibson0}, Barro
\cite{Barro} and Luttmer and Mariotti \cite{Luttmer-Mariotti} to non
linear MCPs. Our results suggest then that the observational
equivalence results implied by the existence of linear MCPs, does
not hold when the technology is non linear. In the presence of non
linear technology, the MCP dynamic path emerging from our analysis
are not possible to reproduce with constant discount.

Non observational equivalence is also present in the infinite
horizon buffer stock models with income uncertainty, borrowing
constraints and hyperbolic discounting in discrete time (Harris and
Laibson \cite{Harris-Laibson}) and continuous time (Harris and
Laibson \cite{Harris-Laibson2}). These two papers show existence and
uniqueness of smooth MCPs when the discount function is sufficiently
close to the exponential discount.\footnote{When the discount
function is quasi-geometric (Phelps and Pollak \cite{Phelps-Pollak})
and sufficiently close to a geometric one, Harris and Laibson
\cite{Harris-Laibson} used tools from the bounded variations
calculus to Show existence and uniqueness of the equilibrium
strategies. Using alternative techniques,  Harris and Laibson
\cite{Harris-Laibson2} proved existence and uniqueness of smooth
equilibria for a stochastic discount function when it is in some
sense sufficiently close to the exponential discount function (this
is what they coin the instantaneous gratification model). It appears
that both proofs require the presence of income uncertainty.} Our
results do not overlap with theirs since we allow for non linear
technology and we do not have a borrowing constraint.
Methodologically, our proof of existence is different and does not
require the presence of uncertainty or the discount function to be
sufficiently close to the exponential function. Our results is also
different because multiplicity emerges as a central feature of the
equilibrium in our context.

Another type of MCPs existence results is found in the literature on
altruistic generations growth economies. In this context, existence
of MCP has been established for a finite horizon setting (Bernheim
and Rey \cite{Bernheim-Rey}) and for an infinite horizon setting
(Bernheim and Rey \cite{Bernheim-Rey1}) (see also Caplin and Leahy
\cite{Caplin-Leahy} for a recent related MCP's existence result
under uncertainty). In contrast to our approach, the existence
result of those papers hinges critically on introducing production
uncertainty.

Our paper also relates to the literature on the Ramsey growth model
when the central planner himself displays hyperbolic discount.
Krusell, Kuru\c{s}\c{c}u and Smith \cite{Krusell-Smith 0} undertook
an elegant comparison of a decentralized and a centralized version
of the Ramsey model with quasi-geometric discount, in discrete
time\footnote{See also a related paper by Judd \cite{Judd}.}. The
analysis in Krusell, Kuru\c{s}\c{c}u and Smith \cite{Krusell-Smith
0} does not cover the fundamental issue of multiple equilibria
because it is assumed that the equilibria must be the limit of
finite-horizon equilibria. We do not impose this restriction in our
setting and the infinite horizon naturally underscores multiple
equilibria. Multiplicity is also discussed in Karp \cite{Karp} and
Krusell and Smith \cite{Krusell-Smith}. In a continuous time model ,
Karp \cite{Karp} obtained the MCP's necessary conditions in a growth
model with non constant discount rate by considering first the
equilibrium of a sequence of planners in discrete time and then he
took the continuous time limit. While the passage to the limit is
not mathematically justified in Karp \cite{Karp},  his equilibrium necessary conditions
are consistent with Our incremental contribution
relative to his is that we took the necessary theoretical steps to
define the novel notion of continuous time game and, as a byproduct,
we proved existence of multiple equilibria. Krusell and Smith
\cite{Krusell-Smith} also report multiple equilibria in a Ramsey
growth model with quasi-geometric discount. However, their MCPs are
supported by discontinuous consumption policies whereas our MCPs are
continuously differentiable policies. Therefore, our analysis
suggests that the multiplicity is somewhat more fundamental because
it does not need to be structured around discontinuous saving rules.

Finally, our application also relates to the literature on optimal
fiscal policy. In the context of a growth model with
infinitely-lived individuals, Judd \cite{Judd} and Chamley
\cite{Chamley} established that capital taxation should be zero in
the long term. In contrast to these result,  we suggest that in
overlapping generations economies, the preference based time
inconsistency friction faced by the government creates a role for
long term capital income taxation.\footnote{The literature on
optimal governmental policy with overlapping generations includes
Diamond \cite{Diamond1}, Atkinson and Sandmo \cite{ AtkinsonSandmo},
Auerbach and Kotlikoff \cite{AuerbachKotlikoff}, Erosa and Gervais
\cite{ErosaGervais}.} However, our model ignores important aspects
of the governments policy tradeoffs that could result in a myriad of
alternative motives for capital income taxation. For example, the
Mirrlees approach to optimal taxation (see the papers surveyed in
Kocherlakota \cite{Kocherlakota1}) taught us that taxing capital
income can be required if the planner is facing an informational
friction due to the unobservable private skills or productivity. In
the closer (non Mirrleesian) context of overlapping generation,
Erosa and Gervais \cite{ErosaGervais} rationalize capital income tax
as an indirect leisure tax. Nonetheless, we aimed to argue that the
preferences based time inconsistency faced by the government by
itself creates a role for capital income taxation and we hope that the point is
clearest in our simple context.

The rest of the paper is organized as follow. The next section
presents the basic model and discuss the issue of time
inconsistency. We define the continuous time game in Section 3 and
provide the equilibrium characterization in Section 4. In Section 5
we undertake our application to the overlapping generations model.
The last section concludes.

\section{The model}

\label{Section: model}

\subsection{Preferences and production}

We consider a deterministic stationary environment where time is continuous. A
decision maker derives utility from a consumption schedule rate $c$, and the
date $t$ utility has the representation
\begin{equation}
\int_{t}^{\infty}h\left(  s-t\right)  u\left(  c\left(  s\right)  \right)  ds
\label{Utility}%
\end{equation}
for some utility function $u$ and some discount function $h$. We
assume that $u$ is strictly increasing, twice continuously
differentiable and strictly concave. The discount function $h$ is
continuously differentiable and positive, with $h\left(  0\right)
=1,$ $h\left(  t\right)  \geq0$, and
$\int_{0}^{\infty}h(s)ds<\infty$. Note that the discount function
between the current time $t$ and the consumption scheduling time $s$
depends only of $s-t$. This assumption implies, as in Strotz
\cite{Strotz}, that the incremental utility of immediate over
postponed consumption remains invariant with the passage of time. As
a result, the representation (\ref{Utility}) is
\emph{stationary}, meaning that
\[
\int_{t}^{\infty}h\left(  s-t\right)  u\left(  c\left(  s\right)  \right)
ds=\int_{0}^{\infty}h\left(  s\right)  u\left(  c\left(  t+s\right)  \right)
ds.
\]
The decision maker strives to maximize the objective (\ref{Utility}) under the
resource constraint
\begin{equation}
\frac{dk\left(  s\right)  }{ds}=f\left(  k\left(  s\right)  \right)  -c\left(
s\right)  ,\ \ k\left(  t\right)  =k_{t}, \label{Capital accumulation}%
\end{equation}
where $k(s)$ is capital at time $s$ and, where $f$ is a strictly increasing, concave and
continuously differentiable production function.

\subsection{Time consistency}

Unless the discount function is exponential, the marginal rate of consumption
substitution between two future dates will in general change with the mere
passage of time. To see this, fix the dates $t_{1}<t_{2}<t_{3}<t_{4}$ and the
consumption rates $c_{3},c_{4}$ and consider the marginal rate of substitution
(MRS) between consuming $c_{3}$ at date $t_{3}$ and consuming $c_{4}$ at date
$t_{4}$. When calculated from the perspective of date $t_{1}$, the MRS is
$\frac{h(t_{3}-t_{1})u^{\prime}(c_{3})}{h(t_{4}-t_{1})u^{\prime}(c_{4})}$
whereas the MRS is $\frac{h(t_{3}-t_{2})u^{\prime}(c_{3})}{h(t_{4}%
-t_{2})u^{\prime}(c_{4})}$ from the perspective of date $t_{2}$. The
two MRS will be identical if the discount function has the
multiplicative property
$\frac{h(t_{3}-t_{1})}{h(t_{4}-t_{1})}=\frac{h(t_{3}-t_{2})}{h(t_{4}-t_{2})}$
and this must hold for all $t_{1}<t_{2}<t_{3}<t_{4}$. As Strotz
\cite{Strotz} pointed out, a necessary and sufficient condition for
the MRS to be time invariant is that the discount function of the
exponential form $h(t)=e^{-\delta t}$ for some constant discount
rate $\delta\geq0$. The MRS changes with the mere passage of time
with any other discount function, and intertemporal consistency
fails. As a result, when the discount function is not of the
exponential form, a consumption schedule $\left(  c_{0}(s)\right)
_{s\geq0}$ seems optimal for a decision maker who maximizes the
objective (\ref{Utility}) under the constraint (\ref{Capital
accumulation}) at time $t=0$ and yet it will not be perceived as
such at later time $t>0$. So when time $t$ comes, there is no reason
to expect that the decision-maker will actually consume $c_{0}\left(
t\right)  $, as the decision-maker at time $0$ expected of her,
unless, of course, the latter has a way to commit the former. In
other words, for general discount functions, there are a plethora of
\textit{temporary} optimal policies: Each of them will be optimal
when evaluated from one particular point in time, but will cease to
be so when time moves forward. As a result, none of them can be
implemented, unless one of these viewpoints is given a privileged
status and the power to lock in policy for all future times (which,
incidentally, may be regretted afterwards).

In the absence of a commitment technology, the problem of maximizing
(\ref{Utility}) under the constraint (\ref{Capital accumulation}) can no
longer be seen as a classical optimization problem. There is no way for the
decision-maker at time $0$ to achieve what is, from her point of view, the
first-best solution of the problem, and she must turn to a second-best policy.
Defining and studying such a policy is the first aim of this paper. The path
to follow is clear and it is consistent with Strotz \cite{Strotz}, Pollak
\cite{Pollak} and Peleg and Yaari \cite{Peleg-Yaari}. The best the
decision-maker at time $t$ can do is to guess what her successors are planning
to do, and to plan her own consumption $c\left(  t\right)  $ accordingly. We
will then be looking for a subgame-perfect equilibrium of a certain game.

\section{Equilibrium strategies: construction and definition\label{sec1}}

We now proceed to define subgame-perfect equilibrium strategies. Paralleling
Aumann \cite{Aumann} we will consider the continuum of decision makers over
the time interval $[0,\infty)$. At any time $t$, there is a decision-maker who
decides what current consumption $c\left(  t\right)  $ shall be. As is readily
seen from the equation (\ref{Capital accumulation}), changing the value of $c$
at just one point in time will not affect the trajectory. However, the
decision-maker at time $t$ is allowed to form a coalition with her immediate
successors, that is with all $s\in\left[  t,\ t+\varepsilon\right]  $, and we
will derive the definition of an equilibrium strategy by letting
$\varepsilon\rightarrow0$. In fact, we are assuming that the decision-maker
$t$ can commit her immediate successors (but not, as we said before, her more
distant ones), but that the commitment span is vanishingly small. We now
construct and define the equilibrium Markov strategies. We analyze the problem
from the perspective of the decision maker at time $t=0$ but, given the
stationarity of the environment, a similar analysis can be carried out at any
time $t>0$.

We restrict our analysis to \emph{Markov} strategies, in the sense that the
policy depends only on a payoff relevant variable, the current capital stock
and not on past history, current time or some extraneous factors. Such a
strategy is given by $c=\sigma\left(  k\right)  $, where $\sigma:R\rightarrow
R$ is a continuously differentiable function. If we apply the strategy
$\sigma$, the dynamics of capital accumulation from $t=0$ are given by:
\[
\frac{dk}{ds}=f\left(  k\left(  s\right)  \right)  -\sigma\left(  k\left(
s\right)  \right)  ,\ k\left(  0\right)  =k_{0}%
\]

We shall say $\sigma$ \emph{converges to} $\bar{k}$, \textit{a steady state}
of $\sigma$, if $k\left(  s\right)  \longrightarrow\bar{k}$ when
$s\longrightarrow\infty$, when the initial value $k_{0}$ is sufficiently close
to $\bar k$. A strategy $\sigma$ is \emph{convergent} if there is some
$\bar{k}$ such that $\sigma$ converges to $\bar{k}$. In that case, the
integral (\ref{Utility}) is obviously convergent, and its successive
derivatives can be computed by differentiating under the integral. This
assumption is not required but it will greatly simplify the exposition, and
for this reason we will restrict our attention to convergent
strategies\footnote{In fact, we can work with a larger set of policies
$\sigma$ for which the integral (\ref{Utility}) is convergent, and for which
the successive derivatives of (\ref{Utility}) can be computed by
differentiating under the integral. The resulting equilibrium
characterizations of Section \ref{sec2} will be identical.}. Note that if
$\sigma$ converges to $\bar{k}$, then we must have $f\left(  \bar{k}\right)
=\sigma\left(  \bar{k}\right)  $.

Let us now proceed to the definition of equilibrium strategies. A convergent
Markov strategy $c=\sigma\left(  k\right)  $, where $\sigma:R\rightarrow R$ is
a continuously differentiable function, has been announced and is public
knowledge. The decision maker begins at time $t=0$ with capital stock $k$. If
all future decision-makers apply the strategy $\sigma$, the resulting capital
stock $k_{0}$ future path obeys
\begin{align}
\frac{dk_{0}}{dt}  &  =f\left(  k_{0}\left(  t\right)  \right)  -\sigma\left(
k_{0}\left(  t\right)  \right)  ,\ \ t\geq0\label{15}\\
k_{0}\left(  0\right)   &  =k. \label{16}%
\end{align}
We suppose the decision-maker at time $0$ can commit all the decision-makers
in $\left[  0,\ \varepsilon\right]  ,$where $\varepsilon>0$. She expects all
later ones to apply the strategy $\sigma$, and she asks herself if it is in
her own interest to apply the same strategy, that is, to consume
$\sigma\left(  k\right)  $. If she commits to another bundle, $c$ say, the
immediate utility flow during $\left[  0,\ \varepsilon\right]  $ is $u\left(
c\right)  \varepsilon$. At time $\varepsilon$, the resulting capital will be
$k+\left(  f\left(  k\right)  -c\right)  \varepsilon$, and from then on, the
strategy $\sigma$ will be applied which results in a capital stock $k_{c}$
satisfying
\begin{align}
&  \frac{dk_{c}}{dt}=f\left(  k_{c}\left(  t\right)  \right)  -\sigma\left(
k_{c}\left(  t\right)  \right)  ,\ \ t\geq\varepsilon\label{13}\\
&  k_{c}\left(  \varepsilon\right)  =k+\left(  f\left(  k\right)  -c\right)
\varepsilon. \label{14}%
\end{align}
The capital stock $k_{c}$ can be written as $k_{c}\left(  t\right)
=k_{0}\left(  t\right)  +k_{1}\left(  t\right)  \varepsilon$ where
\footnote{To see this, plug $k_{c}\left(  t\right)  =k_{0}\left(  t\right)
+k_{1}\left(  t\right)  \varepsilon$ into (\ref{13}) for $t\geq\varepsilon$,
keeping only the terms of first order in $\varepsilon$, and get
\begin{align*}
\frac{dk_{c}}{dt}  &  =f\left(  k_{0}\left(  t\right)  \right)  +\varepsilon
f^{\prime}\left(  k_{0}\left(  t\right)  \right)  k_{1}\left(  t\right)
-\sigma\left(  k_{0}\left(  t\right)  \right)  -\varepsilon\sigma^{\prime
}\left(  k_{0}\left(  t\right)  \right)  k_{1}\left(  t\right)  .
\end{align*}
Comparing this with (\ref{15}) gives (\ref{Ali1}). Equation (\ref{Ali2}) is
obtained by substituting the expansion $k_{0}\left(  \varepsilon\right)
=k+\varepsilon\frac{dk_{0}}{ds}\left(  0\right)  =k+\varepsilon\left(
f\left(  k\right)  -\sigma\left(  k\right)  \right)  $ into (\ref{14}).}
\begin{align}
&  \frac{dk_{1}}{dt}=\left(  f^{\prime}\left(  k_{0}\left(  t\right)  \right)
-\sigma^{\prime}\left(  k_{0}\left(  t\right)  \right)  \right)  k_{1}\left(
t\right)  ,\ t\geq\varepsilon\label{Ali1}\\
&  k_{1}\left(  \varepsilon\right)  =\sigma\left(  k\right)  -c \label{Ali2}%
\end{align}
where $f^{\prime}$ and $\sigma^{\prime}$ stand for the derivatives of $f$ and
$\sigma$. Summing up, we find that the total gain for the decision-maker at
time $0\,\ $from consuming bundle $c$ during the interval of length
$\varepsilon$ when she can commit, is
\[
u\left(  c\right)  \varepsilon+\int_{\varepsilon}^{\infty}h\left(  s\right)
u\left(  \sigma\left(  k_{0}\left(  t\right)  +\varepsilon k_{1}\left(
t\right)  \right)  \right)  dt,
\]
and in the limit, when $\varepsilon\rightarrow0$, and the commitment span of
the decision-maker vanishes, expanding this expression to the first order
leaves us with two terms
\begin{align}
&  \int_{0}^{\infty}h\left(  t\right)  u\left(  \sigma\left(  k_{0}\left(
t\right)  \right)  \right)  dt\nonumber\\
&  +\varepsilon\left[  u\left(  c\right)  -u\left(  \sigma(k)\right)
+\int_{0}^{\infty}h\left(  t\right)  u^{\prime}\left(  \sigma\left(
k_{0}\left(  t\right)  \right)  \right)  \sigma^{\prime}\left(  k_{0}\left(
t\right)  \right)  k_{1}\left(  t\right)  dt\right]  . \label{eq: total gain}%
\end{align}
where $k_{1}$ solves the linear equation
\begin{align}
\frac{dk_{1}}{dt}  &  =\left(  f^{\prime}\left(  k_{0}(t)\right)
-\sigma^{\prime}\left(  k_{0}(t)\right)  \right)  k_{1}\left(  t\right)
,\ \ t\geq0\label{RI}\\
k_{1}\left(  0\right)   &  =\sigma\left(  k\right)  -c. \label{RCI}%
\end{align}

Note that the first term of (\ref{eq: total gain}) does not depend on the
decision taken at time $0$, but the second one does. This is the one that the
decision-maker at time $0$ will try to maximize. In other words, given that a
strategy $\sigma$ has been announced and that the current state is $k\,$, the
decision-maker at time $0\,\ $faces the optimization problem:%
\begin{equation}
\max_{c}P_{1}\left(  k,\sigma,c\right)  \label{a14}%
\end{equation}
where
\begin{equation}
P_{1}\left(  k,\sigma,c\right)  =u\left(  c\right)  -u\left(  \sigma
(k)\right)  +\int_{0}^{\infty}h\left(  t\right)  u^{\prime}\left(
\sigma\left(  k_{0}\left(  t\right)  \right)  \right)  \sigma^{\prime}\left(
k_{0}\left(  t\right)  \right)  k_{1}\left(  t\right)  dt. \label{a141}%
\end{equation}
In the above expression, $k_{0}\left(  t\right)  $ solves the Cauchy problem
(\ref{15}),(\ref{16}) and $k_{1}\left(  t\right)  $ solves the linear equation
(\ref{RI}),(\ref{RCI}).

\begin{definition}
A convergent Markov strategy $\sigma:R\rightarrow R$ is an \emph{equilibrium
strategy for the intertemporal decision model} (\ref{Utility}) under the
constraint (\ref{Capital accumulation}) if, for every $k\in R$, the maximum in
problem (\ref{a14}) is attained for $c=\sigma\left(  k\right)  $:%
\begin{equation}
\sigma\left(  k\right)  =\arg\max_{c}P_{1}\left(  k,\sigma,c\right)  \label{9}%
\end{equation}

\end{definition}

The intuition behind this definition is simple. Each decision-maker can commit
only for a small time $\varepsilon,\,$\ so he can only hope to exert a very
small influence on the final outcome. In fact, if the decision-maker at time
$0$ plays $c$ when he/she is called to bat, while all the others are applying
the strategy $\sigma$, the end payoff for him/her will be of the form
\[
P_{0}\left(  k,\sigma\right)  +\varepsilon P_{1}\left(  k,\sigma,c\right)
\]
where the first term of the right hand side does not depend on $c$. In the
absence of commitment, the decision-maker at time $0$ will choose whichever
$c$ maximizes the second term $\varepsilon P_{1}\left(  k,\sigma,c\right)  $.
Saying that $\sigma$ is an equilibrium strategy means that the decision maker
at time $0$ will choose $c=\sigma\left(  k\right)  $. Given the stationarity
of the problem, if the strategy $c=\sigma\left(  k\right)  $ is chosen at time
$0$, it will be chosen at any future time $t$ and as a result, the strategy
$\sigma$ can be implemented in the absence of commitment.

Conversely, if a strategy $\sigma$ for the intertemporal decision
model (\ref{Utility}),(\ref{Capital accumulation}) is not an
equilibrium strategy, then it cannot be implemented unless the
decision-maker at time $0$ has some way to commit his successors.
Typically, an optimally committed strategy will not be an
equilibrium strategy. More precisely, a strategy which appears to be
optimal at time $0$ no longer appears to be optimal at times $t>0$,
which means that the decision-maker at time $t$ feels he can do
better than whatever was planned for him to do at time $0$. What
happens then if successive decision-makers take the myopic view, and
each of them acts as if he could commit his successors ?\ At time
$t$, then, the decision-maker would maximize the integral
(\ref{Utility}) with the usual tools of control theory, thereby
deriving a consumption $c=\sigma_{\mathrm{n}}\left(  t,k\right) $.
This is the naive strategy (O'Donoghue and Rabin \cite{O'Donoghue-Rabin1}); in general
it will not be an equilibrium strategy\footnote{Under non-constant
discounting, the commitment strategy $\sigma_{\mathrm{n}}$ is non
stationary and so we must extend the definition of the equilibrium
to a non stationary strategy. Although, we do not report the
definition of the non stationary equilibrium, it can easily be done
with some additional notations.}, so that every decision-maker has
an incentive to deviate.

\section{Characterization of the equilibrium strategies\label{sec2}}

\label{Section: Characterization}

The equilibrium strategy can be fully specified by a single function, the
\emph{value function} $v\left(  k\right)  $, which is reminiscent\ of-
although different from - the value function in optimal control. We will show
that the value function satisfies two equivalent equations, the integrated
equation (IE) and the differentiated equation (\ref{inf2}), the latter one resembling
the classical Hamilton-Jacobi-Bellman (HJB) equation of optimal control. This
similarity is reassuring since it shows how standard methods from control
theory can be adapted to analyze the impact of time inconsistency.
Unfortunately, the similarity is superficial only, since (\ref{inf2}) is a non-local
equation (and not a partial differential equation like (HJB)) and we will
demonstrate that its solutions exhibit different qualitative behavior. In the
knife edge case where the discount rate is constant, the non local term in
(\ref{inf2}) collapses, and (\ref{inf2}) becomes identical to (HJB). Consequently, when the
discount rate is constant, the equilibrium strategies are also optimal from
the perspective of all temporal decision makers.

Given a Markov strategy $\sigma\left(  k\right)  $, continuously
differentiable and convergent, we shall be dealing with the Cauchy problem
(\ref{15}), (\ref{16}). The value $k_{0}\left(  t\right)  $ depends on current
time $t$, initial data $k$, and the strategy $\sigma$. To stress this
dependence, it is convenient to write $k_{0}\left(  t\right)  =\mathcal{K}%
\left(  \sigma;t,k\right)  $ where $\mathcal{K}$ is the \emph{flow} associated
with the differential equation (\ref{15}) defined by
\begin{align}
\frac{\partial\mathcal{K}\left(  \sigma;t,k\right)  }{\partial t}  &
=f\left(  \mathcal{K}\left(  \sigma;t,k\right)  \right)  -\sigma\left(
\mathcal{K}\left(  \sigma;t,k\right)  \right) \label{eq: HJBid}\\
\mathcal{K}\left(  \sigma;0,k\right)   &  =k. \label{eq: HJBidic}%
\end{align}

The following theorem characterizes the equilibrium strategies and its proof
is given in Appendix \ref{A}. There are two parts in the equilibrium
characterization: a functional equation on the value function and an
instantaneous optimality condition determining current consumption. Here
$v^{\prime}$ is the derivative of $v$ and $i$ is the inverse of marginal
utility $u^{\prime}$. As usual, $i\circ v^{\prime}(x)=i\left(  v^{\prime
}\left(  x\right)  \right)  $

\begin{theorem}
\label{Th: condition necessaire}Let $\sigma:R\rightarrow R$ be a continuously
differentiable convergent strategy. If $\sigma$ is an equilibrium strategy,
then the value function
\begin{equation}
v\left(  k\right)  :=\int_{0}^{\infty}h\left(  t\right)  u\left(
\sigma\left(  \mathcal{K}\left(  \sigma;t,k\right)  \right)  \right)  dt
\label{valdef}%
\end{equation}
satisfies, for all $k$, the functional equation
\begin{equation}
v\left(  k\right)  =\int_{0}^{\infty}h\left(  t\right)  u\left(  i\circ
v^{\prime}\left(  \mathcal{K}\left(  i\circ v^{\prime};t,k\right)  \right)
\right)  dt \tag{IE}\label{inf1}%
\end{equation}
and the instantaneous optimality condition
\begin{equation}
u^{\prime}\left(  \sigma\left(  k\right)  \right)  =v^{\prime}\left(
k\right)  ,\ \ \sigma\left(  k\right)  =i\left(  v^{\prime}\left(  k\right)
\right)  \label{eq: enveloppe}%
\end{equation}

Conversely, if a function $v$ is twice continuously differentiable, satisfies
(\ref{inf1}), and the strategy $\sigma=i\circ v^{\prime}$ is convergent, then
$\sigma$ is an equilibrium strategy.
\end{theorem}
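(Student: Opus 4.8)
The plan is to derive the equilibrium equations by exploiting the first-order condition implicit in Definition (\ref{9}), and then to reverse the argument. First I would establish the instantaneous optimality condition (\ref{eq: enveloppe}). Since $\sigma$ is an equilibrium strategy, for each fixed $k$ the map $c\mapsto P_{1}(k,\sigma,c)$ attains its maximum at $c=\sigma(k)$. Differentiating (\ref{a141}) with respect to $c$: the term $u(c)$ contributes $u'(c)$, the term $-u(\sigma(k))$ is independent of $c$, and the integral depends on $c$ only through the initial condition $k_{1}(0)=\sigma(k)-c$ of the linear equation (\ref{RI})--(\ref{RCI}). Because (\ref{RI}) is linear in $k_{1}$, we have $k_{1}(t)=(\sigma(k)-c)\,\Phi(t)$ where $\Phi$ solves (\ref{RI}) with $\Phi(0)=1$; hence $\partial k_{1}(t)/\partial c=-\Phi(t)$. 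Setting the $c$-derivative to zero at $c=\sigma(k)$ gives
\begin{equation}
u'\left(\sigma(k)\right)=\int_{0}^{\infty}h(t)\,u'\left(\sigma(k_{0}(t))\right)\sigma'(k_{0}(t))\,\Phi(t)\,dt. \label{pp:foc}
\end{equation}
So the next task is to identify the right-hand side of (\ref{pp:foc}) with $v'(k)$.

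The key step is differentiating the value function (\ref{valdef}) in $k$. Writing $k_{0}(t)=\mathcal{K}(\sigma;t,k)$, convergence of $\sigma$ lets me differentiate under the integral, so $v'(k)=\int_{0}^{\infty}h(t)\,u'(\sigma(k_{0}(t)))\sigma'(k_{0}(t))\,\partial_{k}\mathcal{K}(\sigma;t,k)\,dt$. The point is that $\partial_{k}\mathcal{K}(\sigma;t,k)$ is exactly the solution of the variational equation (\ref{eq: HJBid})--(\ref{eq: HJBidic}) linearized along the trajectory, i.e. it solves $\frac{d}{dt}\partial_{k}\mathcal{K}=(f'(k_{0}(t))-\sigma'(k_{0}(t)))\,\partial_{k}\mathcal{K}$ with $\partial_{k}\mathcal{K}(\sigma;0,k)=1$ — which is precisely the equation satisfied by $\Phi$ above. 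By uniqueness of solutions to this linear ODE, $\partial_{k}\mathcal{K}(\sigma;t,k)=\Phi(t)$, and therefore the right-hand side of (\ref{pp:foc}) equals $v'(k)$. This yields $u'(\sigma(k))=v'(k)$, and since $u$ is strictly concave, $u'$ is invertible with inverse $i$, giving $\sigma(k)=i(v'(k))$, which is (\ref{eq: enveloppe}). Substituting $\sigma=i\circ v'$ into the definition (\ref{valdef}) of $v$ then produces the functional equation (\ref{inf1}) directly.

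For the converse, I would run the argument backwards. Given a twice continuously differentiable $v$ satisfying (\ref{inf1}) with $\sigma:=i\circ v'$ convergent, I must check that the maximum in (\ref{a14}) is attained at $c=\sigma(k)$. First, (\ref{inf1}) says precisely that the function defined by (\ref{valdef}) for this $\sigma$ agrees with $v$ — one should note that the right side of (\ref{inf1}) is by definition the integral (\ref{valdef}) evaluated at $\sigma=i\circ v'$, so $v$ is genuinely the value function of $\sigma$. Then the computation above, read in reverse, shows that the first-order condition $\partial_{c}P_{1}(k,\sigma,c)|_{c=\sigma(k)}=u'(\sigma(k))-v'(k)=0$ holds. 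To upgrade this stationary point to a global maximum I would check concavity of $c\mapsto P_{1}(k,\sigma,c)$: the $u(c)$ term is strictly concave in $c$ by hypothesis on $u$, while the integral term is \emph{linear} in $c$ (since $k_{1}(t)$ is linear in $\sigma(k)-c$ and everything else in the integrand is independent of $c$), so $P_{1}$ is strictly concave in $c$ and the unique critical point $c=\sigma(k)$ is the global maximizer. That establishes the equilibrium property.

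The main obstacle I anticipate is the rigorous justification of differentiating the improper integral (\ref{valdef}) under the integral sign and the identification $\partial_{k}\mathcal{K}=\Phi$ uniformly in $t$; this requires controlling the variational flow $\Phi(t)$ as $t\to\infty$. Convergence of $\sigma$ to a steady state $\bar k$ with $f(\bar k)=\sigma(\bar k)$ is what makes this work: near $\bar k$ the coefficient $f'(k_{0}(t))-\sigma'(k_{0}(t))$ tends to $f'(\bar k)-\sigma'(\bar k)$, and for the trajectory to actually converge this limit should be negative (or the relevant stability condition holds), forcing $\Phi(t)$ to decay and $h(t)$ being integrable then secures absolute convergence of the differentiated integral. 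Making this stability bookkeeping precise — rather than the algebraic manipulations, which are routine — is the delicate part, and it is presumably where the hypothesis that $\sigma$ be convergent (and the footnote's remark about the larger admissible class) is doing its real work.
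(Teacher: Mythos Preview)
Your proposal is correct and follows essentially the same route as the paper: identify the linearized flow $\Phi$ (the paper calls it $\mathcal{R}$) with $\partial_k\mathcal{K}$, differentiate $v$ under the integral to recognize the first-order condition for $P_1$ as $u'(\sigma(k))=v'(k)$, and for the converse use that the integral term in $P_1$ is linear in $c$ so strict concavity of $u$ makes $c=\sigma(k)$ the global maximizer. The paper's sufficiency argument is phrased as the direct inequality $P_1(k,\sigma,c)=u(c)-u(\sigma(k))-u'(\sigma(k))(c-\sigma(k))\leq 0$, but this is the same concavity observation you make.
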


The instantaneous relation (\ref{eq: enveloppe}) expresses the usual tradeoff
between the utility derived from current consumption and the utility value of
saving. This is a standard condition in a world where there is one commodity
that can be used for investment or consumption. Let us spell out what equation
(\ref{inf1}) means. Given a candidate function $v$, we must first solve the
Cauchy problem (\ref{eq: HJBid}), (\ref{eq: HJBidic}) with $\sigma=i\circ
v^{\prime}$. Second, we calculate the right-hand side of equation
(\ref{inf1}), which is an integral along the trajectory of capital stock. The
final result should be equal to $v\left(  k\right)  $. Equation (\ref{inf1})
is therefore a fundamental characterization of the equilibrium strategies and
it takes the form of a functional equation on $v$. In order to contrast the
equilibrium dynamics with the dynamics resulting from using the optimal
control approach, the following proposition gives an alternative
characterization, the differentiated equation, which resembles the usual Euler
equation from optimal control and its proof is given in Appendix B.

\begin{proposition}
\label{Proposition: IE / DE} Let $v$ be a $C^{2}$ function such that the
strategy $\sigma=i\circ v^{\prime}$ converges to $\bar{k}$. Then $v$ satisfies
the integrated equation (IE) if and only if it satisfies the following
functional equation
\begin{equation}
-\int_{0}^{\infty}h^{\prime}(t)u\circ i\left(  v^{\prime}(\mathcal{K}\left(
i\circ v^{\prime};t,k\right)  \right)  dt=\sup_{c}\left[  u(c)+v^{\prime
}(k)\left(  f(k)-c\right)  \right] \label{inf2}%
\end{equation}
together with the boundary condition
\begin{equation}
v\left(  \bar{k}\right)  =u\left(  f\left(  \bar{k}\right)  \right)  \int
_{0}^{\infty}h\left(  t\right)  dt \label{BCChar}%
\end{equation}

\end{proposition}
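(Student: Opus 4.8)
The plan is to reduce the functional equation (IE), which involves an integral along the whole trajectory, to the pointwise identity (\ref{inf2}) by differentiating once along the flow, the boundary condition (\ref{BCChar}) playing the role of the constant of integration. Throughout write $\sigma=i\circ v'$ and $k_{0}(t)=\mathcal{K}\left(\sigma;t,k\right)$, and use the semigroup property of the flow of the autonomous equation (\ref{eq: HJBid}), namely $\mathcal{K}\left(\sigma;s,\mathcal{K}\left(\sigma;t,k\right)\right)=\mathcal{K}\left(\sigma;s+t,k\right)$. A preliminary observation used in both directions is that, by strict concavity of $u$, the map $c\mapsto u(c)-v'(k)c$ is maximized at $c=i\left(v'(k)\right)=\sigma(k)$, so that
\[
\sup_{c}\left[u(c)+v'(k)\left(f(k)-c\right)\right]=u\left(\sigma(k)\right)+v'(k)\left(f(k)-\sigma(k)\right).
\]
Also note $u\circ i\left(v'(\mathcal{K}\left(i\circ v';t,k\right))\right)=u\left(\sigma(k_{0}(t))\right)$, so the left-hand side of (\ref{inf2}) is $-\int_{0}^{\infty}h'(t)\,u\left(\sigma(k_{0}(t))\right)dt$.

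For the direction (IE) $\Rightarrow$ (\ref{inf2}) and (\ref{BCChar}): applying (IE) at the point $k_{0}(t)$ and using the semigroup property gives
\[
v\left(k_{0}(t)\right)=\int_{0}^{\infty}h(s)\,u\left(\sigma(k_{0}(s+t))\right)ds=\int_{t}^{\infty}h(s-t)\,u\left(\sigma(k_{0}(s))\right)ds .
\]
Differentiating both sides in $t$ (Leibniz's rule on the right, using $h(0)=1$; the chain rule on the left with $\dot k_{0}=f(k_{0})-\sigma(k_{0})$) and then setting $t=0$ yields, for every $k$,
\[
v'(k)\left(f(k)-\sigma(k)\right)=-u\left(\sigma(k)\right)-\int_{0}^{\infty}h'(s)\,u\left(\sigma(k_{0}(s))\right)ds,
\]
which rearranges, via the preliminary observation, into (\ref{inf2}). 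For (\ref{BCChar}), evaluate (IE) at $k=\bar k$: since $\sigma$ converges to $\bar k$ we have $f(\bar k)=\sigma(\bar k)$, hence $k_{0}(t)\equiv\bar k$, and (IE) collapses to $v(\bar k)=u\left(f(\bar k)\right)\int_{0}^{\infty}h(t)\,dt$.

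For the converse, fix $k$ for which the trajectory $k_{0}(t)$ converges to $\bar k$ and set $W(t)=\int_{t}^{\infty}h(s-t)\,u(\sigma(k_{0}(s)))\,ds=\int_{0}^{\infty}h(\tau)\,u(\sigma(k_{0}(t+\tau)))\,d\tau$. Differentiating and using the semigroup property,
\[
\dot W(t)=-u\left(\sigma(k_{0}(t))\right)-\int_{0}^{\infty}h'(\tau)\,u\left(\sigma(k_{0}(t+\tau))\right)d\tau .
\]
Applying (\ref{inf2}) at the point $k_{0}(t)$ (together with the preliminary observation, which identifies the right-hand side of (\ref{inf2}) with $u(\sigma(k_{0}(t)))+v'(k_{0}(t))(f(k_{0}(t))-\sigma(k_{0}(t)))$) shows that the integral term equals $u(\sigma(k_{0}(t)))+v'(k_{0}(t))(f(k_{0}(t))-\sigma(k_{0}(t)))$, so $\dot W(t)=v'(k_{0}(t))\dot k_{0}(t)=\frac{d}{dt}v(k_{0}(t))$. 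Hence $W(t)-v(k_{0}(t))$ is constant; letting $t\to\infty$, using $k_{0}(t)\to\bar k$, continuity of $v$, and (\ref{BCChar}) shows this constant is $0$, and reading off $t=0$ gives exactly (IE) at $k$.

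The main technical point to be careful about is the legitimacy of differentiating the improper integrals in $t$ and of the passage to the limit $t\to\infty$: both rest on the convergence of $\sigma$, which makes the trajectory $k_{0}$ bounded and $u(\sigma(k_{0}(\cdot)))$ converge to $u(\sigma(\bar k))$, together with the integrability of $h$ and the convergence of $\int_{0}^{\infty}h'(t)u(\sigma(k_{0}(t)))\,dt$ implicit in the statement of (\ref{inf2}); these supply the dominated-convergence and uniform-convergence bounds needed to apply Leibniz's rule and to conclude.
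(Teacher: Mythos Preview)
Your proof is correct and follows essentially the same approach as the paper: both arguments track the quantity $v\left(\mathcal{K}(\sigma;t,k)\right)-\int_{t}^{\infty}h(s-t)\,u(\sigma(\mathcal{K}(\sigma;s,k)))\,ds$ along the flow, show via the semigroup property that its $t$-derivative vanishes precisely when (\ref{inf2}) holds at $\mathcal{K}(\sigma;t,k)$, and use the limit $t\to\infty$ together with (\ref{BCChar}) to identify the constant. The only cosmetic difference is that the paper names this quantity $\psi(t,k)$ and computes its derivative once for both directions, whereas you treat the two implications separately.
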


It is useful to rewrite the differentiated equation (\ref{inf2}) as
\begin{equation}
\rho(k)=\frac{1}{v(k)}\sup_{c}\left[  u(c)+v^{\prime}(k)\left(  f(k)-c\right)
\right]  \label{dr}%
\end{equation}
where
\[
\rho(k)=-\frac{\int_{0}^{\infty}h^{\prime}\left(  t\right)  u\left(
\sigma\left(  \mathcal{K}\left(  \sigma;s,k\right)  \right)  \right)  dt}%
{\int_{0}^{\infty}h\left(  s\right)  u\left(  \sigma\left(  \mathcal{K}\left(
\sigma;s,k\right)  \right)  \right)  ds}%
\]
is interpreted as an effective discount rate. Equation (\ref{dr}) then tells
us that, along an equilibrium path, the relative changes in value to the
consumer must be equal to the effective discount rate. The effective discount
rate is here endogenous to the model and its presence reflects the strategic
behavior of the current decision maker resulting from internalizing the
behavior of future decision makers. In order to gain some insights into the
economic meaning of equation (\ref{inf2}), we first consider the exponential
discount function $h\left(  t\right)  =e^{-\delta_{0} t}$ for $\delta_{0}>0$.
With exponential discounting, $h^{\prime}(t)=-\delta_{0} h(t)$, and the
resulting effective discount rate is just the constant discount rate
$\rho(k)=\delta_{0}$ for all $k$. Equation (\ref{inf2}) becomes then simply
the familiar (HJB) equation
\begin{equation}
\delta_{0} v(k)=\sup_{c}\left[  u(c)+v^{\prime}(k)\left(  f(k)-c\right)
\right]  . \label{eq: HJB}%
\end{equation}
Second, we consider the case where $h$ is piecewise exponential,
$h(t)=e^{-\delta_{0}t}$ for $t\leq\tau$ and $h(t)=e^{-\delta_{1}t}$ for
$t>\tau$, with $\tau>0$ and $\delta_{0}>\delta_{1}$. The discount rate of this
discount function is decreasing with time and therefore the willingness to
postpone consumption at the margin is increasing over time. When the decision
maker is \textit{naive} in the sense that he acts as if he could commit the
future decision makers and does not learn from his past mistakes, his behavior
can be described by the (HJB) equation (\ref{eq: HJB}). In contrast, a
decision maker following the equilibrium strategy recognizes that future
decision makers will spend more than he currently hope and, in reaction to
that, he may accumulate more wealth than the naive decision maker. Equation
(\ref{inf2}) reflects exactly this idea since, when the discount function is
piecewise exponential, it can be written as
\begin{equation}
\delta_{0}v(k)=\sup_{c}\left[  u(c)+\left(  \delta_{0}-\delta_{1}\right)
g(k)+v^{\prime}(k)\left(  f(k)-c\right)  \right]  \label{eq: HJBexample}%
\end{equation}
where
\[
g(k)=\int_{\tau}^{\infty}e^{-\delta_{1}t}u\circ i\left(  v^{\prime
}(\mathcal{K}\left(  i\circ v^{\prime};t,k\right)  \right)  dt.
\]
The only difference between the naive policy characterization (\ref{eq: HJB})
and the equilibrium policy characterization (\ref{eq: HJBexample}) is the
extra term $(\delta_{0}-\delta_{1})g(k)$. Assuming that $g$ is positive and
increasing in $k$, it is then evident that the presence of the extra term
$(\delta_{0}-\delta_{1})g(k)$ yields additional incentives to save, relative
to the naive policy (where $g=0$).\footnote{To be more concrete, assume
further that the technology is linear, $f(k)=Ak$ and the utility of the form
$u(c)=\log(c)$. Solving (\ref{eq: HJB}) gives the naive policy
\[
\sigma_{n}(k)=\delta_{0}k
\]
whereas solving (\ref{eq: HJBexample}) gives the function
$g(k)=\frac {1}{\delta_{1}}e^{-\delta_{1}\tau}\log\left(  k\right) +
\varsigma$ where $\varsigma$ is a constant, and the resulting
equilibrium policy is
\[
\sigma_{e}(k)=\frac{\delta_{0}}{1+\frac{\delta_{0}-\delta_{1}}{\delta_{1}%
}e^{-\delta_{1}\tau}}k.
\]
This example illustrates the strategic motive of saving since the equilibrium
marginal propensity to consume is low relative to the naive policy.}

Neither equation (\ref{inf1}) nor equation (\ref{inf2}) are of a
classical mathematical type. If it were not for the integral term,
equation (\ref{inf2})\ would be a first-order partial differential
equation of known type (Hamilton-Jacobi), but this additional term,
which is\ non-local (an integral along the trajectory of the flow
(\ref{eq: HJBid}) associated with the solution $v$), creates a loss
of regularity in the functional equation that generates mathematical
complications. As a result, existence and uniqueness problems arise
as they typically do in dynamic games. The topic requires more
scrutiny. The next section applies the method in the context of an
overlapping generation model where time inconsistency is typically
faced by a utilitarian government.

\section{An overlapping generations growth model}

\subsection{The model}

\textit{Structure of the population.} We consider a continuous time
overlapping generations model of growth analysis, along the lines of
Blanchard (1985). The economy is composed of overlapping generations
of finitely-lived individuals who face a constant rate of death
$\pi>0$.\footnote{As mentioned by Blanchard \cite{Blanchard}, the
individual's rate of death can also be interpreted at the rate of
extinction of a dynasty. With this interpretation, the perpetual
youth assumption ($p$ constant) seems more acceptable. The
mathematical analysis suggests that the indeterminacy result does
not require the perpetual youth assumption.} At time $s$, the
probability of surviving until time $t\geq s$ is given by
$e^{-\pi(t-s)}$ and consequently, the expected life is
$\int_{s}^{\infty}t \pi e^{-\pi(t-s)} dt = \frac{1}{\pi}$. At each
instant a large cohort of identical individuals is born at a
normalized rate of $1$ so that the total number of individuals born
during a small time interval $[t_{1}, t_{2} )$ is $t_{2}-t_{1}$.
Because the cohort is large, there is no uncertainty on how the
cohorts's size and the total population vary over time. A cohort
born at time $\tau$ (the $\tau$-vintage) has a geometrically
declining size which, as of time $t\geq\tau$, is equal to
$e^{-\pi(t-\tau)}$. At each point of time $t$, the size of the
population is constant and it is given by
$\int_{-\infty}^{t}e^{-\pi(t-\tau)}d\tau=\frac{1}{\pi}$. The
time-$t$ expected lifetime utility of a vintage-$\tau$ individual
($\tau\leq t$) is, as in Yaari \cite{Yaari},
\[
\Gamma(\tau,t)=\int_{t}^{\infty}e^{-(\delta+\pi)(s-t)}\ln(c(\tau,s))ds,
\]
where $\delta>0$ is the constant pure rate of time preference and $c(\tau,s)$
is the consumption rate of an individual born at time $\tau$, as of time
$s\geq\tau$. The utility function of a newly born individual from
vintage$-\tau$ is then
\[
\Gamma(\tau,\tau)=\int_{\tau}^{\infty}e^{-(\delta+\pi)(s-\tau)}\ln
(c(\tau,s))ds.
\]

\textit{Technology.} The technology is represented by a constant return to
scale production function depending on two factors of production, aggregate
capital $K$ and aggregate labor. From above, the size of the population is
constant and assuming further that labor supply is inelastic, the production
function (net of depreciation) is a continuously
differentiable and concave function of aggregate capital stock $f(K)$. As in Section \ref{Section: model},
since capital and output are the same commodity, capital can be invested or
consumed and the investment rate is
\begin{equation}
\frac{dK(t)}{dt}=f(K(t))-C(t) \label{Capital accumulation bis}%
\end{equation}
where
\begin{equation}
C(t)=\int_{-\infty}^{t}c(\tau,t)e^{-\pi(t-\tau)}d\tau
\label{Aggregate consumption}%
\end{equation}
is the aggregate consumption at time $t$.

\textit{The social criterion.} We consider a social planner
maximizing a utilitarian criterion balancing the lifetime utilities
of the current population and the unborn generations. The planner is concerned with the generations' welfare from the
present (time $t_0=0$) onward and considers the alive individuals as
if they had just been born so that the criterion takes the form
\begin{equation}
\int_{0}^{\infty}e^{-\rho\tau}\Gamma(\tau,\tau)d\tau+\int_{-\infty}^{0}
e^{\pi\tau} \left(  \int_{0}^{\infty}e^{-(\delta+\pi)s}\ln(c(\tau
,s))ds\right)  d\tau. \label{eq: Criteria forward looking}
\end{equation}
The first term of the above criteria discounts back to time $0$ the
expected lifetime utility of unborn generations using the social
discount rate $\rho>0$. The second term is the remaining expected
lifetime utility of the individuals who were born in the past and
are still alive at time $0$. Notice the asymmetry of the treatment
of the unborn cohorts relative the the surviving ones in the
criteria (\ref{eq: Criteria forward looking}). The later's utilities
are discounted back to current time whereas the former are
discounted back to their birth date. Unlike the criteria of Calvo
and Obstfeld \cite{CalvoObstfeld} where symmetry is assumed, the
absence of symmetry in the criteria (\ref{eq: Criteria forward
looking}) creates a time inconsistency due to the dependency of the
planner's utility flow on the planning time. To see this, fix the
planning time $t_0=0$, change the order of integration in (\ref{eq:
Criteria forward looking}) and make the change of variable from
vintage $\tau$ to age $n=t-\tau$, to get the welfare function
\begin{align}
\int_{0}^{\infty} e^{-\rho s} \left\{  j_{u}(c,0,s) + j_{b}(c,0,s)
\right\}  ds.
\label{Welfare}%
\end{align}
where $j_{u}(c,0,s)$ is the time $s$ utility flow attributed to the
unborn cohorts defined for any $t \leq s$ by
\[
j_{u}(c,t,s) = \int_{0}^{s-t}e^{-(\delta+\pi - \rho)n}
\ln(c(s-n,s))dn,
\]
and where $j_{b}(c,0,s)$ is the time $s$ utility flow attributed to
the surviving cohorts defined for any $t \leq s$ by
\[
j_{b}(c,t,s)=\int_{s-t}^{\infty} e^{-(\delta+\pi -
\rho)n}e^{-(\delta-\rho)(s-t-n)}  \ln(c(s-n,s)) dn.
\]
Alternatively, if the planning time is $t_0=t>0$, a similar calculation shows that the planner's criterium becomes
\[
\int_{t}^{\infty} e^{-\rho (s-t)} \left\{  j_{u}(c,t,s) +
j_{b}(c,t,s) \right\}  ds.
\]
When $\delta = \rho$, time consistency obtains because the utility flow from the perspective of the planning times $t_0=t$ becomes $$j_{u}(c,t,s)+j_{b}(c,t,s)=\int_0^{\infty}
e^{- \pi n} \ln(c(s-n,s))dn$$ and therefore it is independent from the planning time. However, so long as $\delta > \rho$
the planner faces a time inconsistency problem because the utility flow $ j_{u}(c,t,s) +
j_{b}(c,t,s)$ depends explicitly on the planning time $t_0=t$.

\subsection{The centrally planned economy}

Beginning with a level of capital $K(0)$ at the planning time $t_0=0$, the
planner maximizes the criterion (\ref{Welfare}) under
the budget constraints (\ref{Capital accumulation bis}) and
(\ref{Aggregate consumption}). It is useful to
partition the planner's problem into two tasks each of which will
take the other as given. These two tasks are executed by two
fictitious planners, the {\it intra-period planner} and the {\it
metaplanner}.  The intra-period planner  takes as given the
aggregate consumption and is in charge of allocating the aggregate
consumption across the surviving cohorts. The metaplanner on the
other hand takes as given the path of intra cohorts' allocation
rules of aggregate consumption and is in charge of the aggregate
investment decision over time. Notice that both planners face a
dynamic decision problem. Let us now describe more formally the
task of the intra-period planner and the metaplanner.

At the planning time is $t_0=0$, the intra-period planners
commitment allocation of $C(s)$ at time $s\geq0$ is obtained by
maximizing
\[
\int_{0}^{s} e^{-(\delta+\pi-\rho)n} \ln(c(s-n,s))
dn+\int_{s}^{\infty }e^{-(\delta+\pi - \rho
)n}e^{-(\delta-\rho)(s-n)}\ln(c(s-n,s)) dn
\]
under the budget constraint $C(s)=\int_{0}^{\infty}c(s-n,s)e^{-\pi
n}dn$. The aggregate consumption expenditure $C(s)$ is exogenous to
the intra-period planner and the optimal commitment allocation is
\begin{align}
\label{eq: Commitment alloc rule 1}c(s-n,s)  &
=\pi(\delta+\pi-\rho)\frac{e^{(\rho-\delta)n}}{\pi+(\delta
-\rho)e^{-(\pi+\delta-\rho)s}}C(s),~~~~\text{ for }n\leq s,\\
\label{eq: Commitment alloc rule 2}c(s-n,s)  &
=\pi(\delta+\pi-\rho)\frac{e^{(\rho-\delta)s}}{\pi+(\delta
-\rho)e^{-(\delta+\pi-\rho)s}}C(s),~~~~\text{ for }n>s.
\end{align}
This allocation rule is however not robust to re-optimization because at the
planning time $t_0=t>0$, the intra-period planner's new commitment allocation is in general not aligned with the
allocation that the planner committed to at time $t_0=0$. For example, at the planning time $t_0=0$ the
intra-period planner allocation is
$$
c_{t_0=0}(t,t)=\pi(\delta+\pi-\rho) \frac{1}{\pi+(\delta
-\rho)e^{-(\pi+\delta-\rho)t}}C(t)
$$
for the cohort which is born at time $t$, but from the perspective
of the planning time $t_0=t$, the commitment allocation for the same cohort
is the egalitarian allocation
$$
c_{t_0=t}(t,t)= \pi C(t).
$$

In order to focus on a simple class of equilibria, we restrict our
analysis to the set of linear and stationary allocation rules for
the intra-period planner of the form
$$
c(t-n,t)=\varphi(n) C(t).
$$
with
\begin{equation} \label{eq: allocation rule constraint}
\int_0^{\infty} e^{- \pi n} \varphi(n)dn =1.
\end{equation}
Under this assumption, the objective (\ref{Welfare}) at the planning
time $t_0=0$ becomes
\[
\int_{0}^{\infty} e^{-\rho s} \left\{ \int_0^s
e^{-(\delta+\pi-\rho)n} \ln\left( \varphi(n) \right) dn
+\int_s^{\infty} e^{-\pi n} e^{-(\delta - \rho)s } \ln\left(
\varphi(n) \right) dn + L(s)\right\} ds,
\]
where $L(s)$ is a function depending only on time $s$ and the
aggregate consumption $C(s)$, which are exogenous and therefore, it drops out
of the intra-period planner's problem. Integrating by part this
formula, shows that the intra-period planner's objective at time
$t_0=0$ is given by
\begin{equation} \label{eq: intraperiod planner}
\left( \frac{1}{\rho} - \frac{1}{\delta} \right) \int_0^{\infty}
e^{-(\delta + \pi)s} \ln\left( \varphi(s)\right) ds +
\frac{1}{\delta} \int_0^{\infty} e^{-\pi s} \ln\left(
\varphi(s)\right) ds.
\end{equation}
When undertaken from the perspective of the planning time $t_0=t$,
the same calculations shows that the intra-period planner's
objective is identical with the objective (\ref{eq: intraperiod
planner}). Therefore, when restricted to stationary linear
allocation rules, all intra-period planners agree to use the rule
$\varphi( \cdot )$ that maximizes (\ref{eq: intraperiod planner})
under the constraint (\ref{eq: allocation rule constraint}). The
solution to this maximization problem gives the allocation rule
\begin{equation} \label{eq: optimal allocation rule}
\varphi(s) = \frac{\pi}{\delta} \frac{\delta + \pi}{\rho + \pi}
\left( (\delta - \rho) e^{-\delta s}+ \rho \right).
\end{equation}
How important is the assumption of stationary and linear allocation
rule? The assumption of stationarity of the allocation rule is
consistent with our view that the planner's problem in invariant
with the passage of time. The assumption of linearity is more
questionable because the equilibrium may require, for instance, to
have a less egalitarian consumption sharing rule in prosperous
periods. If we permit non linear allocation rules of the form
$c(t-n,t) = \varphi(n,C(t))$ we will in principle have to cope with
the time inconsistency of the intra-period planner, the time
inconsistency of the metaplanner (which, as we will show shortly,
arises even with linear allocation rules) and, moreover the
(sequential) interaction between the intra-period planner and the
metaplanner. We do not attempt to address this question fully.
Instead, assuming linear and stationary allocation rules allowed us
to work out a simple example of second best equilibrium where the
successive intra-period planners agree on the best allocation rule
and where the intra-period planner's problem is decoupled from the
metaplanner's problem.\footnote{Different assumptions on the
behavior of the intra-period planner are also possible and will lead
to an identical dynamic problem for the metaplanner. For example, if
we assume that the intra-period planners are naive, in the sense
that they do not internalize their time inconsistency problem, the
egalitarian policy allocation rule $c(t-n,t)=\pi C(t)$ will prevail.
The metaplanner's problem is then identical to the one that results
from using the allocation rule (\ref{eq: optimal allocation rule}).
The metaplanner's problem also remains intact if we assume that the
intra-period planner can commit to never change  the allocation rule
decided at time $t_0=0$. Under this assumption, the intra-period
planner's optimally committed allocation rule  is the non-stationary
and age dependent allocation rule (\ref{eq: Commitment alloc rule
1}), (\ref{eq: Commitment alloc rule 2}).}

Plugging either the optimal consumption allocation (\ref{eq: optimal
allocation rule}) (or the egalitarian allocation) in the planner's criterion (\ref{Welfare}) and
calculating the resulting integrals yields the time $s$ utility flow
to the metaplanner
\[
\frac{e^{-(\delta+\pi-\rho)s}}{\pi}\ln(C(s))+\frac{1-e^{-(\delta+\pi-\rho)s}%
}{\delta+\pi-\rho}\ln(C(s))+M(s)
\]
where $M(s)$ is a function depending only on the variables that are
exogenous and therefore, it drops out of the metaplanner's problem.
Substituting the utility flow into the criterion (\ref{Welfare}) and
dropping the function $M$ allows to express the metaplanner's
criterion at the planning time $t_0=0$ solely in terms of the
aggregate quantities
\begin{equation}
\underbrace{\int_{0}^{\infty}\frac{e^{-(\delta+\pi)s}}{\pi}\ln\left(
C(s)\right)  ds}_{\text{Welfare allocated to surviving cohorts}}%
+\underbrace{\int_{0}^{\infty}\frac{e^{-\rho
s}-e^{-(\delta+\pi)s}}{\delta +\pi-\rho}\ln\left(  C(s)\right)
ds}_{\text{Welfare allocated to unborn
cohorts}} \label{Welfare bis}%
\end{equation}
After rearranging and normalizing the welfare equation (\ref{Welfare
bis}), the metaplanner's objective at the planning time $t_0=0$ is
\begin{equation}
\int_{0}^{\infty}\frac{(\delta-\rho)e^{-(\delta+\pi)s}+\pi e^{-\rho s}}%
{\delta+\pi-\rho}\ln\left(  C(s)\right)  ds \label{Welfare ter}.
\end{equation}
A similar calculation shows that the metaplanner's objective from
the perspective of the planning time $t_0=t$ is
\begin{equation}
\int_{t}^{\infty}\frac{(\delta-\rho)e^{-(\delta+\pi)(s-t)}+\pi e^{-\rho (s-t)}}%
{\delta+\pi-\rho}\ln\left(  C(s)\right)  ds \label{Welfare
terlater}.
\end{equation}
The metaplanner is therefore facing a time consistency problem
because the marginal rates of substitution between consumption at
two dates in the future is changing by the mere passage of time.
This is exactly the type of time inconsistency that we discussed in the
sections $2$ to $4$. In the present context, time consistency is
endogenous since it is created from the structure of the criterion
(\ref{eq: Criteria forward looking}) which asymmetrically treats
the surviving cohorts (whose lifetime utility is discounted back to
current date) and the unborn cohorts (whose lifetime utility is
instead discounted back to birth date). Our planner is therefore
different from the time consistent planner of Calvo and Obstfled
(1998).

\subsection{Equilibrium strategies for the metaplanner}

As in Section \ref{Section: Characterization}, we denote by $v$ the
equilibrium value for the metaplanner. We are in the special case
when $u\left( c\right)  =\ln c$ and:
\begin{equation}
h\left(  t\right)
=\frac{\delta-\rho}{\pi+\delta-\rho}e^{-(\delta+\pi
)t}+\frac{\pi}{\pi+\delta-\rho}e^{-\rho t} \label{IEOG}%
\end{equation}

Due to the specific form of discounting in (\ref{IEOG}), it is possible to
rewrite the equilibrium characterization as a system of two differential
equations for two functions, the equilibrium value $v$, and a function $w$
which determines how the welfare is split between the surviving cohorts and
the unborn cohorts. The next proposition states this result and its proof is
given in the appendix.

\begin{proposition}
\label{prop1} Let $\sigma$ be a continuously differentiable strategy
converging to $\bar k$. If $\sigma$ is an equilibrium strategy, then the
functions $v$ and $w$ defined by
\begin{align}
v(k)  &  =\int_{0}^{\infty}\left(  \frac{\delta- \rho}{\pi+ \delta- \rho}
e^{-(\delta+\pi)t}+ \frac{\pi}{\pi+ \delta- \rho}e^{-\rho t}\right)
\ln\left(  \sigma\left(  \mathcal{K}\left(  \sigma;t,k\right)  \right)
\right)  dt,\label{eq1}\\
w(k)  &  =\frac{\pi}{\pi+ \delta- \rho} \int_{0}^{\infty}\left(
-e^{-(\delta+\pi)t}+e^{-\rho t}\right)  \ln\left(  \sigma\left(
\mathcal{K}\left(  \sigma;t,k\right)  \right)  \right)  dt \label{eq2}%
\end{align}
satisfy the system
\begin{align}
\left(  f-\frac{1}{v^{\prime}}\right)  v^{\prime}-\ln\left(  v^{\prime
}\right)   &  =\delta v-\left(  \delta-\rho\right)  w,\ \ \label{eq:ODE1}\\
\left(  f-\frac{1}{v^{\prime}}\right)  w^{\prime}  &  =-\pi v+\left(  \rho
+\pi\right)  w \label{eq:ODE2}%
\end{align}
with the boundary conditions
\begin{align}
v\left(  \bar{k}\right)   &  =\frac{\rho+\pi}{\rho\left(  \delta+\pi\right)
}\ln f\left(  \bar{k}\right) \label{BC1}\\
w\left(  \bar{k}\right)   &  =\frac{\pi}{\rho\left(  \delta+\pi\right)  }\ln
f\left(  \bar{k}\right)  \label{BC2}%
\end{align}
and the strategy $\sigma$ is given by $\sigma(k) = 1/v^{\prime}(k)$.

Conversely, let $v$ be a $C^{2}$ function such that the strategy
$\sigma(k)=1/v^{\prime}(k)$ converges to $\bar{k}$. If there exists a $C^{1}$
function $w$, such that $(v,w)$ satisfies the system (\ref{eq:ODE1}%
)-(\ref{eq:ODE2}) and the boundary conditions (\ref{BC1})-(\ref{BC2}), then
$\sigma$ is an equilibrium strategy converging to $\bar{k}$
\end{proposition}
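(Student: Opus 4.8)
The plan is to derive the two-equation system from the single functional equation (IE) of Theorem \ref{Th: condition necessaire} by exploiting the fact that the discount function (\ref{IEOG}) is a convex combination of two exponentials. First I would introduce, alongside the value function $v$ defined by (\ref{eq1}) (which is exactly (\ref{valdef}) specialized to the present $h$ and $u=\ln$), the auxiliary function $w$ of (\ref{eq2}); note that $w$ is precisely the difference of the two "exponential pieces" of $v$, suitably normalized, so $v$ and $w$ together encode the two integrals $\int_0^\infty e^{-(\delta+\pi)t}\ln\sigma(\mathcal{K}(\sigma;t,k))\,dt$ and $\int_0^\infty e^{-\rho t}\ln\sigma(\mathcal{K}(\sigma;t,k))\,dt$ separately. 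The instantaneous optimality condition (\ref{eq: enveloppe}) with $u=\ln$ gives $\sigma(k)=1/v'(k)$, which is where that identification comes from.

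The key computation is to differentiate $v$ and $w$ along the flow. Writing $\Phi(t,k)=\ln\sigma(\mathcal{K}(\sigma;t,k))$ and using $\tfrac{d}{dt}\mathcal{K}=f(\mathcal{K})-\sigma(\mathcal{K})$ from (\ref{eq: HJBid}), I would apply the semigroup property $\mathcal{K}(\sigma;t+s,k)=\mathcal{K}(\sigma;t,\mathcal{K}(\sigma;s,k))$ to get, for each exponential kernel $e^{-\lambda t}$, a relation of the form
\begin{equation}
\bigl(f(k)-\sigma(k)\bigr)\frac{d}{dk}\!\int_0^\infty e^{-\lambda t}\Phi(t,k)\,dt=\lambda\!\int_0^\infty e^{-\lambda t}\Phi(t,k)\,dt-\Phi(0,k),\nonumber
\end{equation}
i.e. the usual "differentiate the HJB integral" identity, with $\Phi(0,k)=\ln\sigma(k)=-\ln v'(k)$. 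Applying this with $\lambda=\delta+\pi$ and with $\lambda=\rho$, and taking the combinations that produce $v$ and $w$, yields two first-order ODEs in $k$. After substituting $\sigma(k)=1/v'(k)$ (so $f-\sigma=f-1/v'$ and $\ln\sigma=-\ln v'$) and doing linear algebra to eliminate the raw integrals in favor of $v$ and $w$, these become exactly (\ref{eq:ODE1})–(\ref{eq:ODE2}). The boundary conditions (\ref{BC1})–(\ref{BC2}) come from evaluating (\ref{eq1})–(\ref{eq2}) at $k=\bar k$, where $\sigma(\bar k)=f(\bar k)$ is constant so the integrals are just $\ln f(\bar k)$ times $\int_0^\infty e^{-\lambda t}dt=1/\lambda$; this is the specialization of (\ref{BCChar}).

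For the converse I would reverse the argument: given $C^2$ functions $v,w$ solving the system with those boundary conditions and with $\sigma=1/v'$ convergent to $\bar k$, I would show $v$ satisfies (IE). The natural route is to recombine (\ref{eq:ODE1})–(\ref{eq:ODE2}) into the single differentiated equation (\ref{inf2}) (equivalently (\ref{eq: HJBexample})-type form for this $h$), check the boundary condition (\ref{BCChar}) is met, and invoke Proposition \ref{Proposition: IE / DE} to conclude (IE) holds, then apply the converse half of Theorem \ref{Th: condition necessaire} to deduce $\sigma$ is an equilibrium strategy converging to $\bar k$. The main obstacle, and the step deserving the most care, is the elimination step: one must verify that the linear system obtained for the two integrals $\int e^{-(\delta+\pi)t}\Phi\,dt$ and $\int e^{-\rho t}\Phi\,dt$ is invertible (coefficient determinant nonzero — this uses $\delta\ne\rho$, consistent with the standing assumption $\delta>\rho$, and $\pi+\delta-\rho\ne 0$) so that the reformulation in terms of $(v,w)$ is genuinely equivalent and not merely implied; and in the converse direction, that a $w$ recovered from the ODE system indeed coincides with the integral expression (\ref{eq2}), which requires the boundary data to pin down the right solution of the linear ODE for $w$ rather than a spurious one.
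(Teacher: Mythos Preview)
Your forward direction is essentially the paper's argument: the paper packages your ``differentiate along the flow'' identity into a Lemma (applied with the kernels $h_0=h$ and $h_0(t)=(1-\theta)(e^{-\rho t}-e^{-(\delta+\pi)t})$) and then reads off (\ref{eq:ODE1}) from Proposition \ref{Proposition: IE / DE} and (\ref{eq:ODE2}) from the Lemma, exactly as you describe.

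The converse, however, does not go through the route you sketch. ``Recombining (\ref{eq:ODE1})--(\ref{eq:ODE2}) into (\ref{inf2})'' is circular: the left-hand side of (\ref{inf2}) is the non-local integral $-\int_0^\infty h'(t)\ln\sigma(\mathcal{K}(\sigma;t,k))\,dt$, and to identify it with $\delta v-(\delta-\rho)w$ you would already need to know that the given $(v,w)$ equal their integral expressions (\ref{eq1})--(\ref{eq2}), which is precisely what must be proved. You correctly flag this at the end, but the resolution is not just ``boundary data pins down $w$.'' The paper proceeds differently: it defines the integral versions $(v_2,w_2)$ from $\sigma_1=1/v_1'$, shows they satisfy a system with the \emph{same} drift $f-\sigma_1$ (not $f-1/v_2'$), subtracts to obtain a \emph{linear homogeneous} system
\[
\varphi\begin{pmatrix}v'\\ w'\end{pmatrix}=\begin{pmatrix}\delta & -(\delta-\rho)\\ -\pi & \rho+\pi\end{pmatrix}\begin{pmatrix}v\\ w\end{pmatrix},\qquad \varphi(k)=f(k)-1/v_1'(k),
\]
with zero boundary data at $\bar k$, and then proves a dedicated uniqueness Lemma: diagonalizing the matrix (eigenvalues $\rho$ and $\delta+\pi$) gives explicit solutions $C\exp(\lambda\int^k 1/\varphi)$, and since $1/\varphi$ blows up to $+\infty$ on one side of $\bar k$ and $-\infty$ on the other, continuity at $\bar k$ forces the constants to vanish. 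That spectral/sign argument is the genuine content of the converse and is what your sketch is missing.
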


Note that, by Proposition \ref{Proposition: IE / DE}, the
characterization (\ref{eq:ODE1})-(\ref{eq:ODE2}) is equivalent to
the more general equation (\ref{inf2}). Thus, when the discount function has
the special form (\ref{IEOG}), the non local one dimensional
equation (\ref{inf2}) becomes a system of two ordinary differential
equations without non local terms. This reduction is critical for
our existence result in the next theorem. The proof of
Proposition \ref{prop1} is given in the appendix.

Using the welfare decomposition (\ref{Welfare bis}), it can be shown that,
given the current level of capital $K(0)=k$, the criterion (\ref{Welfare ter})
allocates the welfare $v(k)-w(k)$ to the surviving cohorts whereas the unborn
cohorts' welfare is $w(k)$. To build some intuition on the system
(\ref{eq:ODE1}), (\ref{eq:ODE2}), it is useful to consider the case where
$\rho=\delta$, in which case equation (\ref{IEOG}) shows that the metaplanner is
facing a standard time consistent investment problem with a constant discount
rate $\delta$. As a result, the system $(v,w)$ is uncoupled in the sense that
$v$ can be solved for without knowing $w$. In this case, differentiating
(\ref{eq:ODE1}), gives the classical autonomous dynamical system describing
the evolution of the variables $(K,C)$
\begin{align*}
\frac{dK(t)}{dt}  &  =f(K(t))-C(t),\\
\frac{1}{C(t)}\frac{dC(t)}{dt}  &  =f^{\prime}(K(t))-\delta.
\end{align*}

When the metaplanner's problem is time inconsistent, the system $(K,C)$ is not
autonomous anymore and an additional variable $W(t)=w(K(t))$ is required to
describe the equilibrium dynamics. Using (\ref{eq:ODE1}), (\ref{eq:ODE2}) the
evolution of the economy can be described by the autonomous system $(K,C,W)$
\begin{align}
\label{eq:autonome1}\frac{dK(t)}{dt}  &  =f(K(t))-C(t),\\
\label{eq:autonome2}\frac{1}{C(t)}\frac{dC(t)}{dt}  &  =f^{\prime}(K(t))-\delta-\left(
\delta-\rho\right)  \frac{\pi}{\delta} \nonumber \\
&  -\frac{\delta-\rho}{\delta}\frac{C(t)}{f(K(t))-C(t)}\left(  \pi
\ln(C(t))-\rho\left(  \pi+\delta\right)  W(t)\right)  ,\\
\label{eq:autonome3}\frac{dW(t)}{dt}  &  =-\frac{1}{\delta}\left(  \pi\frac{f(K(t))-C(t)}%
{C(t)}+\pi\ln(C(t))-\rho\left(  \pi+\delta\right)  W(t).\right)
\end{align}

The dynamics of interest are the one where the economy naturally tends to a
steady state, that is a condition of the economy in which the aggregate level
of output, capital and consumption do not change over time. The following
theorem provides our main result on existence of multiple steady states

\begin{theorem}
\label{Th: Existence and multiplicity} Consider any $\bar{k} \in I$ where
$I=\{ k \mid\rho\frac{\pi+\delta}{\pi+\rho}<f^{\prime}(k)<\delta\}$. Then
there exist an equilibrium strategy, defined on some neighbourhood $\Omega$ of
$\bar{k}$ and converging to $\bar{k}$.
\end{theorem}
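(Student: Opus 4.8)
The plan is to reduce Theorem \ref{Th: Existence and multiplicity} to the construction of a suitable local center manifold for the autonomous system $(K,C,W)$ given by \eqref{eq:autonome1}--\eqref{eq:autonome3}. By Proposition \ref{prop1}, producing an equilibrium strategy converging to $\bar k$ is equivalent to producing a $C^{2}$ function $v$ on a neighborhood $\Omega$ of $\bar k$, together with a $C^{1}$ function $w$, solving the system \eqref{eq:ODE1}--\eqref{eq:ODE2} with the boundary conditions \eqref{BC1}--\eqref{BC2} and with $\sigma=1/v'$ converging to $\bar k$. So first I would fix $\bar k\in I$, set $\bar C:=f(\bar k)$, and read off the required steady-state value $\bar W$ of $w$ from \eqref{BC2}, namely $\bar W=\frac{\pi}{\rho(\delta+\pi)}\ln f(\bar k)$; one checks directly that $(\bar k,\bar C,\bar W)$ is a rest point of \eqref{eq:autonome1}--\eqref{eq:autonome3}, using that $f'(\bar k)$ lies in the open interval $I$ (this is exactly where the endpoints $\rho\frac{\pi+\delta}{\pi+\rho}$ and $\delta$ enter: they guarantee the steady-state consumption equation is consistent and the rest point is genuine).

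Next I would linearize the vector field \eqref{eq:autonome1}--\eqref{eq:autonome3} at $(\bar k,\bar C,\bar W)$ and compute the $3\times 3$ Jacobian $J$. The key spectral claim to establish is that, for $\bar k\in I$, $J$ has exactly one eigenvalue with negative real part and the other two with positive real part (or, in borderline configurations, a zero eigenvalue to be absorbed into a center manifold). Given such a splitting, the stable manifold theorem — or, in the degenerate case, the center manifold theorem of Carr \cite{Carr} — yields a one-dimensional invariant manifold $\mathcal{M}$ through $(\bar k,\bar C,\bar W)$, tangent to the stable (resp. center) eigenspace, consisting of trajectories that converge to $(\bar k,\bar C,\bar W)$ as $t\to\infty$. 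Parametrizing $\mathcal{M}$ by $K=k$ near $\bar k$, this manifold is a graph $C=\sigma(k)$, $W=w(k)$ with $\sigma,w$ as smooth as $f$ allows (in particular $\sigma\in C^{2}$, $w\in C^{1}$), and $\sigma(\bar k)=f(\bar k)$.

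Then I would verify that the pair $(v,w)$ obtained by integrating $v'=1/\sigma$ along $\mathcal{M}$ satisfies exactly \eqref{eq:ODE1}--\eqref{eq:ODE2}: this is just the content of how \eqref{eq:autonome2}--\eqref{eq:autonome3} were derived from \eqref{eq:ODE1}--\eqref{eq:ODE2} in the text, run in reverse, together with the fact that on $\mathcal{M}$ the trajectory $(K(t),C(t),W(t))$ satisfies $C(t)=\sigma(K(t))$, $W(t)=w(K(t))$. The boundary conditions \eqref{BC1}--\eqref{BC2} are pinned down by evaluating at the rest point and choosing the constant of integration for $v$ so that \eqref{BC1} holds — consistency of \eqref{BC2} with \eqref{eq:ODE2} at $\bar k$ is automatic. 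Finally, $\sigma=1/v'$ converges to $\bar k$ because $\mathcal{M}$ is (locally) attracting within itself, and by the converse direction of Proposition \ref{prop1} this $\sigma$ is an equilibrium strategy converging to $\bar k$; since $\bar k$ was an arbitrary point of the open interval $I$, we get a continuum of steady states.

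The main obstacle will be the eigenvalue count in the second step: showing that the linearization $J$ has the one-stable/two-unstable signature precisely when $f'(\bar k)\in\big(\rho\frac{\pi+\delta}{\pi+\rho},\,\delta\big)$, and handling the boundary cases where an eigenvalue crosses zero (where one must invoke the center manifold theorem rather than the stable manifold theorem, and check that the reduced dynamics on the center manifold still produces a convergent $\sigma$). This requires a somewhat delicate sign analysis of the characteristic polynomial of $J$ — computing its coefficients in terms of $f'(\bar k)$, $\delta$, $\rho$, $\pi$ and applying a Routh--Hurwitz-type argument — and it is here that the explicit shape of the interval $I$ is forced by the dynamics rather than imposed by hand.
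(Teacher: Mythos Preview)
Your plan has a genuine gap at the very first step: the autonomous system \eqref{eq:autonome1}--\eqref{eq:autonome3} is \emph{singular} at the rest point $(\bar k,\bar C,\bar W)$. In \eqref{eq:autonome2} the factor $C/(f(K)-C)$ blows up precisely when $C=f(K)$, i.e.\ at every candidate steady state; the other factor $\pi\ln C-\rho(\pi+\delta)W$ vanishes there as well, so the product is an indeterminate $0/0$ whose value depends on the direction of approach. The vector field is therefore not $C^{1}$ in any neighborhood of $(\bar k,\bar C,\bar W)$, and neither the stable manifold theorem nor the center manifold theorem can be applied to it as written. Your Jacobian $J$ simply does not exist, so the ``one-stable / two-unstable'' spectral claim cannot even be formulated for this system.

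The paper's proof is organized around removing exactly this singularity. It works instead with the $(v,w)$ system \eqref{eq:ODE1}--\eqref{eq:ODE2}, introduces $\mu(k)=\delta v-(\delta-\rho)w-\ln f(k)$ and the auxiliary inverse $x=X(\mu)$ of $x\mapsto x-\ln(1+x)$, and then passes to an autonomous system in $(x,k,v)$ with respect to a new parameter $s$. That system is smooth at the rest point $(0,\bar k,\bar v)$, and its linearization has eigenvalues $(\delta-f'(\bar k),0,0)$: one positive eigenvalue (since $f'(\bar k)<\delta$ on $I$) and a \emph{two-dimensional} kernel. So it is the center manifold theorem of Carr, not the stable manifold theorem, that is invoked, and the invariant object is a two-dimensional center manifold, inside which one then solves a scalar initial-value problem to recover $v(k)$. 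Finally, the interval $I$ does not come from a Routh--Hurwitz count on $J$; it arises only in the last step, when one checks that the resulting strategy $\sigma=1/v'$ is convergent, i.e.\ that $f'(\bar k)+v''(\bar k)/v'(\bar k)^{2}<0$, and this inequality is shown (via \eqref{eq:ODE1}--\eqref{eq:ODE2}) to be equivalent to $\rho\frac{\pi+\delta}{\pi+\rho}<f'(\bar k)<\delta$.
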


This result shows the existence of a continuum of steady states of
the economy. The result is proved in the appendix and it is obtained
by using the Central Manifold Theorem \cite{Carr}.
Associated to the multiple steady states is a continuum of
equilibrium strategies each of them generating a path of aggregate
capital stock. The equilibrium multiplicity is expectation driven in
the sense that if all metaplanners agree that $\bar{k}_{1}\in I$
then it will
and if all metaplanners agree that $\bar{k}_{2}\in I$ is a steady
state, then it will as well.
The multiplicity occurs because the subgame perfection condition is
not sufficient to pin down how the metaplanners coordinate their
beliefs on splitting the resources between the surviving cohorts and
the unborn ones.  When the social and private discount rate are
equal, the interval $I$ becomes one point and the economy's capital
stock converges then to its modified golden rule level $\bar k^{mg}$
defined by $f'\left( \bar k^{mg} \right) = \delta$. The next step is
then to evaluate the efficiency of these equilibria. In particular,
one may wonder if \textit{conservative} equilibrium policies (i.e.
those generating higher steady states level of capital and
consumption) Pareto dominate the less conservative equilibrium
policies. The distant unborn generations clearly favor the
conservative equilibria since the steady state consumption is
larger. However, the preference of the currently surviving cohorts
is ambiguous because current consumption must decrease in order to
achieve a higher steady state of capital and as a result, welfare
may be lower. While evaluating efficiency at this level of
generality does not seem obvious, equilibrium ranking is
nevertheless possible if one allow the metaplanners to revise their
belief at later stages of the game. So far, we implicitly ruled out
renegotiation in our definition of equilibrium policies since
beliefs were fixed once for all at an ex ante stage of the game.
This assumption may be justified when it is very costly to
renegotiate but it seems inappropriate in our context since it is
conceivable that the metaplanner may reconsider at future stages
their coordination with future metaplanners. Following an argument
initially introduced by Farell and Maskin \cite{Farrell} and
Bernheim and Ray \cite{Bernheim-Rey2} we illustrate in the next
subsection how giving the planner the ability to reconsider his
beliefs (present and future) - allows to shrink the set of subgame
perfect equilibria.

\subsection{Renegotiation-proof equilibrium}

In order to select a subgame perfect equilibrium, we use the notion
of renegotiation-proof equilibrium introduced by Farrell and Maskin
\cite{Farrell} in the context of a two players repeated game. The
notion has been used in a multiple selves infinite horizon game by
Kocherlakota \cite{Kocherlakota} (see also Asheim \cite{Asheim1} for
an alternative refinement) and we provide a local definition of this
notion.

\begin{definition}
An equilibrium strategy $\bar\sigma$ converging to $\bar k$ is locally
renegotiation-proof (l.r.p.) if there exists an open neighborhood $\Omega$ of
$\bar k$ such that for all $k^{*} \in\Omega$, the equilibrium policy
$\sigma^{*}$ converging to $k^{*}$ satisfies
\begin{equation}
\label{eq: lrp}\int_{0}^{\infty}h\left(  t\right)  u\left(  \sigma^{*}\left(
\mathcal{K}\left(  \sigma^{*};t,k_{0}\right)  \right)  \right)  dt\leq\int
_{0}^{\infty}h\left(  t\right)  u\left(  \bar{\sigma}\left(  \mathcal{K}%
\left(  \bar{\sigma};t,k_{0}\right)  \right)  \right)  dt
\end{equation}
for all $k_{0} \in\Omega$.
\end{definition}




This definition says that if an equilibrium policy is locally
renegotiation-proof then, from the perspective of the distant
metaplanners, a small perturbation of the policy, inducing a shift
in the steady state level of capital is dominated by the status quo.
To see this, notice that since the l.r.p. equilibrium policy
$\bar\sigma$ is converging to $\bar k$ then, for all $k_{0}$ in the
domain of definition of $\bar\sigma$, there exists a time $T>0$ such
that the flow $\left(  \mathcal{K}\left( \bar{\sigma};t,k_{0}\right)
, t \geq T \right)  $ belongs to $\Omega$. Therefore equation
(\ref{eq: lrp}) with $k_{0}=\mathcal{K}\left(
\bar{\sigma};s,k_{0}\right)  $ says that, for any $s \geq T$ the
metaplanner at time $s$ prefers to keep the strategy $\bar\sigma$
rather than switching to a neighboring rule $\sigma^{*}$. As a
result, if the metaplanners are restricted to switch policies by
small increments rather than by more drastic deviations, only l.r.p
equilibria are credible. The next proposition says that the local
renegotiation-proof condition rules out some subgame perfect
equilibria and allows to identify the steady state of the economy.

\begin{proposition}
\label{NL}Any equilibrium strategy $\bar{\sigma}$ converging to some
$\bar{k}$ satisfying
$\rho\frac{\pi+\delta}{\pi+\rho}<f^{\prime}(\bar{k})$ is not locally
renegotiation-proof: distant metaplanners will always agree to
switch to a new equilibrium policy converging to $k^{*}$ where
$k^{*}$ is slightly above $\bar k$.
\end{proposition}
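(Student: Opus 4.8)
The plan is to exhibit, inside any prescribed neighbourhood $\Omega$ of $\bar k$, a competing equilibrium that strictly beats $\bar\sigma$ at some point of $\Omega$, so that (\ref{eq: lrp}) fails. The whole argument rests on the observation that the hypothesis $f'(\bar k)>\rho\frac{\pi+\delta}{\pi+\rho}$ is precisely the statement that, at its own steady state, the equilibrium value function is flatter than the ``steady-state value curve'' $\phi(k):=\frac{\rho+\pi}{\rho(\delta+\pi)}\ln f(k)$.

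Two facts hold at the steady state of every equilibrium. If $\sigma$ is an equilibrium converging to $\hat k$ with value function $v$, then the boundary condition (\ref{BC1}) of Proposition \ref{prop1} reads $v(\hat k)=\phi(\hat k)$, and the optimality relation (\ref{eq: enveloppe}) --- which for $u=\ln$ is $\sigma=1/v'$ --- together with the steady-state identity $\sigma(\hat k)=f(\hat k)$ gives $v'(\hat k)=1/f(\hat k)$. Since $\phi'(\hat k)=\frac{\rho+\pi}{\rho(\delta+\pi)}\,\frac{f'(\hat k)}{f(\hat k)}$, we obtain the key equivalence
\[
\phi'(\hat k)>v'(\hat k)\quad\Longleftrightarrow\quad f'(\hat k)>\rho\,\frac{\pi+\delta}{\pi+\rho}.
\]

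Apply this to $\bar\sigma$ at $\hat k=\bar k$: under the hypothesis, $\bar v(\bar k)=\phi(\bar k)$ while $\bar v'(\bar k)<\phi'(\bar k)$, so $\phi-\bar v$ vanishes at $\bar k$ with strictly positive derivative there; hence $\bar v(k)<\phi(k)$ for all $k$ in some interval $(\bar k,\bar k+\epsilon_{0})$. Now fix an arbitrary open neighbourhood $\Omega$ of $\bar k$ and pick $\epsilon\in(0,\epsilon_{0})$ small enough that $k^{\ast}:=\bar k+\epsilon$ lies in $\Omega$ (hence in the domain of $\bar\sigma$); shrinking $\epsilon$ further keeps $k^{\ast}\in I$, since $\bar k\in I$ --- the range in which an equilibrium converging to a prescribed steady state is known to exist, by Theorem \ref{Th: Existence and multiplicity} --- and $f'$ is continuous with $f'(\bar k)>\rho\frac{\pi+\delta}{\pi+\rho}$. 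By that theorem there is an equilibrium strategy $\sigma^{\ast}$ converging to $k^{\ast}$, with value function $v^{\ast}$; by (\ref{BC1}), $v^{\ast}(k^{\ast})=\phi(k^{\ast})$. Combining,
\[
\int_{0}^{\infty}h(t)\,u(\sigma^{\ast}(\mathcal{K}(\sigma^{\ast};t,k^{\ast})))\,dt=v^{\ast}(k^{\ast})=\phi(k^{\ast})>\bar v(k^{\ast})=\int_{0}^{\infty}h(t)\,u(\bar\sigma(\mathcal{K}(\bar\sigma;t,k^{\ast})))\,dt,
\]
so (\ref{eq: lrp}) is violated at $k_{0}=k^{\ast}\in\Omega$. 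As $\Omega$ was arbitrary, $\bar\sigma$ is not l.r.p. Moreover $v^{\ast}-\bar v$ is continuous and strictly positive at $k^{\ast}$, hence on a whole neighbourhood of $k^{\ast}$; so every metaplanner whose capital has been driven into that region --- i.e. every sufficiently distant metaplanner on the path generated by $\bar\sigma$ --- strictly gains by switching to $\sigma^{\ast}$, which converges to $k^{\ast}$ slightly above $\bar k$.

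The computations are light. The only real point of care is securing the competing equilibrium $\sigma^{\ast}$, which forces one to keep $k^{\ast}$ inside $I$ and appeal to Theorem \ref{Th: Existence and multiplicity}; everything else is dictated by the two universal steady-state identities $v(\hat k)=\phi(\hat k)$ and $v'(\hat k)=1/f(\hat k)$, which is what links the two equilibria even though one is only defined near $\bar k$ and the other near $k^{\ast}$.
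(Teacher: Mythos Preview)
Your argument is correct for the formal ``not l.r.p.'' conclusion and rests on the same computation the paper uses, namely that at any equilibrium steady state $\hat k$ one has $v(\hat k)=\phi(\hat k)$ and $v'(\hat k)=1/f(\hat k)$, so that $\phi'(\hat k)-v'(\hat k)=\frac{1}{f(\hat k)}\big(\frac{\pi+\rho}{\rho(\pi+\delta)}f'(\hat k)-1\big)$ is positive exactly on $I$. The packaging differs: the paper introduces the two–variable function $V(k_0,\bar k)$ (value as a function of initial point and target steady state), proves the calculus identity $\partial_{\bar k}V(\bar k,\bar k)=\frac{d}{d\bar k}V(\bar k,\bar k)-\partial_{k_0}V(\bar k,\bar k)=\phi'(\bar k)-1/f(\bar k)$, and then invokes continuity of $\partial_{\bar k}V$ plus the mean value theorem to get $V(k_0,k^\ast)>V(k_0,\bar k)$ for \emph{every} $k_0$ in a full neighbourhood of $\bar k$. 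Your route is more elementary: you compare $\bar v$ to $\phi$ pointwise and exhibit the violation of (\ref{eq: lrp}) at the single point $k_0=k^\ast$, which suffices to negate the definition. A pleasant by-product is that you never need $V$ to be differentiable in the steady-state parameter, an assumption the paper makes without verification.

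The one place to tighten is your final sentence about ``distant metaplanners''. Along $\bar\sigma$ the capital stock converges to $\bar k$, not to $k^\ast$, so you need $v^\ast>\bar v$ on a neighbourhood of $\bar k$, whereas continuity at $k^\ast$ only gives you a neighbourhood of $k^\ast$, and you have not argued that it reaches $\bar k$. The fix is immediate with your own tools: since $k^\ast\in I$, the same slope comparison gives $(v^\ast)'(k^\ast)=1/f(k^\ast)<\phi'(k^\ast)$ with $v^\ast(k^\ast)=\phi(k^\ast)$, so $v^\ast(k)>\phi(k)$ for $k$ slightly \emph{below} $k^\ast$; in particular $v^\ast(\bar k)>\phi(\bar k)=\bar v(\bar k)$ once $\epsilon$ is small enough, and then continuity gives $v^\ast>\bar v$ near $\bar k$ as required.
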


The implementation of the switch can be done easily. Along an
equilibrium path converging to $\bar k$ and induced by the policy
$\bar\sigma$, the time $T$ metaplanner (where $T$ is sufficiently
large) suggest to the successors to switch to a new equilibrium
strategy $\sigma^{*}$ converging to $k^{*}$ where $k^{*}$ is
slightly larger that $\bar k$. Proposition \ref{NL} says that, for
$T$ sufficiently large, the metaplanners have an incentive to play
the new equilibrium policy $\sigma^{*}$ instead of keeping
$\bar{\sigma}$. In summary, all the future metaplanners (from time
$T$ onward) coordinate their beliefs on a new equilibrium
$\sigma^{*}$. Since the metaplanner at time $0$ is aware that this
will happen, it does not make sense for her to start playing
$\bar{\sigma}$ and as a result, she will use the strategy
$\sigma^{*}$. The same argument can however be made iteratively to
$\sigma^{*}$ and the future metaplanners will have interest to
deviate from $\sigma^{*}$ to a neighboring strategy ${\tilde
\sigma}$ converging to ${\tilde k}$ where ${\tilde k}$ is slightly
higher than $k^{*}$. The metaplanner will then apply an equilibrium
strategy converging to the highest steady state level of capital
defined by $f^{\prime}\left(  \bar k \right) =
\rho\frac{\pi+\delta}{\pi+\rho}$. Consequently, the steady state
level of capital depends on the planner's discount rate and the
individual discount rate. The more patient is the planner (lower
$\rho$), or the consumers (lower $\delta$), the higher is the steady
state level of aggregate capital stock stock. This result must be
contrasted with the Calvo and Obstfeld (1988) time consistent
government where the the private rate of time preference is
irrelevant for the steady state level of capital stock. Note that
when the planner's discount rate $\rho$ converges to $0$, the
capital stock converges to its golden rule level $\bar k^g$ defined
by $f' \left(\bar k^g \right) = 0$.

\subsection{Decentralization of the equilibria and fiscal policy}

The government begin at time $t_0=0$ with capital stock $K(0)$ and
tries to implement the second best allocation that we denote
$(K^*(t),C^*(t),W^*(t))_{t\geq 0}$ and such that the steady capital
stock is $K^*(\infty)=\bar k \in I$. This centralized allocation
path gives rise to the real interest rate $r^*_t=f'(K^*(t))$ and the
real per capita wage $\omega^*_t=(f\left( K^*(t) \right) - K^*(t)
f'\left( K^*(t) \right)) \pi$. We recall that  $(K^*,C^*,W^*)$
solves the autonomous system
(\ref{eq:autonome1}),(\ref{eq:autonome2}) and (\ref{eq:autonome3})
and that the resulting disaggregate allocation is given for any $\tau\leq t$ by
\begin{equation} \label{eq: disaggregate allocation}
c^*(\tau,t)=\varphi(t-\tau ) C^*(t)
\end{equation}
where the function $\varphi$ is defined by (\ref{eq: optimal allocation
rule}). Differentiating equation (\ref{eq: disaggregate allocation}) with respect to time and using
equation (\ref{eq:autonome2}), shows that the
the disaggregate allocation obeys
\begin{equation} \label{eq: disaggregate dynamic}
\frac{1}{c^*(\tau,t)} \frac{d\ c^*(\tau,t)}{dt} =r^*_t - \delta +
\psi(t) + \frac{\varphi'(t-\tau)}{\varphi(t- \tau)}
\end{equation}
where $\varphi'$ is the derivative of $\varphi$ and where
$$
\psi(t)=-\left( \delta-\rho \right) \frac{\pi}{\delta}
-\frac{\delta-\rho}{\delta}\frac{C^*(t)}{f(K^*(t))-C^*(t)}\left( \pi
\ln(C^*(t))-\rho\left(  \pi+\delta\right)  W^*(t)\right).
$$
When $\rho=\delta$, the intra-period planner uses the egalitarian
allocation rule and the metaplanner's problem admits a first best
solution since it is time consistent and we have $\varphi'=0$ and
$\psi=0$. As a result, when $\rho=\delta$, the disaggregate
consumption obeys $\frac{1}{c^*(\tau,t)} \frac{d\
c^*(\tau,t)}{dt}=r^*_t - \delta $.

The market economy is as the one of Yaari \cite{Yaari} and Blanchard
\cite{Blanchard}. In the absence of bequest motives, the mortality
risk creates a role for annuity contracts. A competitive insurance
market supplies the market with actuarially fair annuities that have
an instantaneous rate of return of $r_t + \pi$, where $r_t$ is the
market real interest rate as of time $t$. An individual born at time
$\tau$ who had accumulated a positive financial wealth $a(\tau , s)$
at time $s \geq \tau$  faces the risk of dying before spending it.
Investing in the annuity permits this individual to receive a
payment $(r_s + \pi) a(\tau, s)$ is he survived at time $s$ and, in
exchange, the insurance company will collect $a(\tau, s)$ if the
individual dies at time $s$. On the other hand, if the individual is
a net borrower, the lending institution faces the risk that the
individual dies before being able to repay the loan. The individual
will issue then an annuity on which a flow of interest of $(r_s +
\pi) a(\tau, s)$ is paid to the insurance company but in which the
debt will be forgiven if the issuer dies. Individuals will always
invest (or borrow) their financial wealth with the insurance
company: If they are lenders, the rate of return is higher with the
insurance company during their lifetime and if they are borrowers,
they have to insure themselves against death because negative
bequests are prohibited.

The fiscal instrument at the disposal of the government consists of
government, age dependent non distorting lump sum  taxation (that is
an income tax with no slope) and age dependent distorting capital
income taxes (or subsidy). Denoting by $a(\tau,t)$ the total assets
of the vintage $\tau$ agent at time $t$, the individual optimization
problem is
$$
\sup \int_t^{\infty} e^{(\delta +\pi) (s-t)} \log \left( c(\tau , s) \right) ds
$$
under the constraint
$$
\frac{d a(\tau,t)}{dt} = ((1 - \eta(\tau,t)) r_t + \pi) a(\tau,t)
-c(\tau,t) + \omega_t + \beta(\tau,t)
$$
where $\eta(\tau,t)$ is the tax rate at time $t$ for an agent born
at $\tau$ and, $\beta(\tau,t)$ is the transfer flow received by a
vintage $\tau$ agent at time $t$ and $r_t$ (resp. $\omega_t$) is the real interest
(resp. wage) rate prevailing at time $t$. The individual optimization problem has the first
order condition
\begin{equation} \label{eq: optimality condition}
\frac{1}{\hat c(\tau,t)} \frac{d\hat c(\tau,t)}{dt}
=(1-\eta(\tau,t))r_t - \delta
\end{equation}
for all $\tau \leq t$ and yields the policy
\begin{equation} \label{eq: optimality condition bis}
\hat c(\tau,t)= \left( \delta + \pi \right) \left[ a(\tau,t) +
h(\tau, t) + b(\tau, t) \right]
\end{equation}
where $h(\tau,t)$ is the human wealth of any individual born a time
$\tau$ as of time $t$,
$$
h(\tau,t) = \int_t^\infty \omega_s e^{-\int_t^s \left(
(1-\eta(\tau,x)) r_x + \pi\right) dx}ds
$$
and where $b(\tau,t)$ is the present value of government transfer flows expected by
a vintage $\tau$ agent as of time $t$,
\begin{equation} \label{eq: Present value lump sum}
b(\tau,t)=\int_t^\infty \beta(\tau,s) e^{-\int_t^s \left(
(1-\eta(\tau,x)) r_x + \pi\right) dx}ds.
\end{equation}
The problem of the government is to credibly commit to a fiscal
policy path $\{(\eta(\tau,t);\beta(\tau,t))_{ \tau \leq t} \}_{t
\geq 0} $, such that,  when the shadow prices $\{r^*_t\}_{t \geq 0}$
and $\{\omega^*_t\}_{t \geq 0}$ are expected, the individual optimal
consumption paths coincide with the desired allocation that is,
$$
\hat c (\tau,t) = c^* (\tau,t),~~
$$
for all $t \geq 0$ and $\tau \leq t$.

It is impossible to decentralize the allocation $c^*$
without government intervention because it is a second best
inefficient allocation. More precisely, in the absence of fiscal
intervention ($\eta=\beta=0$), summing at any date the individual
optimization policies over all surviving cohorts allows to identify
the aggregate dynamics. The resulting steady state of the market
economy is $\bar k_M$ where $\bar k_M$ is the unique solution of the
equation
$$
\left( f'(\bar k_M) - \delta \right) f \left( \bar k_M\right) = \pi (\delta + \pi) \bar k_M.
$$
We refer the reader to Section $1$ and $2$ of Blanchard
\cite{Blanchard} for the details. Blanchard \cite{Blanchard} showed
that that $f'\left( k_M \right)< \delta $ and, since $\delta < f'
\left( \bar k \right)$, the market economy does not sufficiently
accumulate the capital stock relative to the desired centrally
planned economy.

In order to decentralize  $c^*$, let us first assume that we can
find a fiscal policy $(\eta, \beta)$ such that the resulting
individual consumption matches the desired allocation at time $0$,
that is, $\hat c (\tau , 0) = c^*(\tau, 0)$ for all $\tau \leq 0$.
If we want the decentralization to follow through for later
times $t>0$, we must check that $\hat c (\tau , t) = c^*(\tau, t)$
for any $\tau<0$ and $t>0$. This condition will hold if and only if
the optimality condition (\ref{eq: optimality condition}) is
identical to the disaggregate allocation dynamics equation (\ref{eq:
disaggregate dynamic}) which results in
$$
(1-\eta(\tau,t))r^*_t - \delta = r^*_t - \delta + \psi(t) +
\frac{\varphi'(t-\tau)}{\varphi(t- \tau)}
$$
This condition can be expressed as
\begin{equation} \label{eq: tax rate on capital}
\eta(t-n , t)= \frac{1}{r^*_t} \left( -\psi(t)  -
\frac{\varphi'(n)}{\varphi(n)} \right)
\end{equation}
where $n=t-\tau$ is the age of the taxable consumer. Equation (\ref{eq: tax rate on capital})
uniquely identifies the required capital income tax rate since it is
the only distortional tax rate prompting the necessary credibility of the taxation policy.

When the planner's discount rate is equal to the individual discount
rate; $\rho=\delta$, the time inconsistency disappears, the
centrally planned allocation becomes first best and the capital
income tax rate is zero ($\psi=\varphi'=0$). When $\delta \neq
\rho$, the tax policy is age dependent. However, if the intra-period
planner uses the egalitarian allocation rule instead of the
allocation rule (\ref{eq: optimal allocation rule}), the tax rate
becomes age independent and it given by $\eta( t)= -  \psi(t)/
r^*_t$.

At date $t=0$, the government has no flexibility in the choice of
the tax rate policy $\eta$ because it is given by equation (\ref{eq:
tax rate on capital}) but it will use the lump sum taxation in order
to match the initial individual consumption $\hat c(\tau,0)$ with
the disaggregate allocation $c^*(\tau,0)$. To see this, use the
policy formula (\ref{eq: optimality condition bis}) for an agent
born at time $\tau=0$ to get
$$
\hat c(0,0)= \left( \delta + \pi \right) \left[ a(0,0) + h(0, 0) +
b(0, 0) \right]
$$
and, assuming that  $a(0,0)=0$; people inherit neither capital stock
nor debt when they are born, we see that $\hat c(0,0) = c^*(0,0)$ if
and only if
\begin{equation} \label{eq: lump sum tax1}
b(0,0)= \frac{1}{\delta + \pi} c^*(0,0) - h(0,0).
\end{equation}
Equation (\ref{eq: optimality condition bis}) also gives a similar
formula for the present value of transfer flows to the consumers
born at $\tau<0$,
\begin{equation} \label{eq: lump sum tax2}
b(\tau,0)= \frac{1}{\delta + \pi} c^*(\tau,0) - h(\tau,0) -
a(\tau,0)
\end{equation}
where $a(\tau,0)$ is the total assets (capital stock and government
debt) of a consumer born at time $\tau$ as of time $0$.

Finally, the government will face the same decentralization problem
for the generations that will be born at $\tau>0$ and equation (\ref{eq: optimality
condition bis}) gives again the formula
\begin{equation} \label{eq: lump sum tax3}
b(\tau,\tau)= \frac{1}{\delta + \pi} c^*(\tau,\tau) - h(\tau,\tau).
\end{equation}
The right hand sides of (\ref{eq: lump sum tax1}), (\ref{eq: lump
sum tax2}) and (\ref{eq: lump sum tax3}) are known to the government whereas the left
hand sides are related to the transfers flow $\beta$ through the
integral formula (\ref{eq: Present value lump sum}). Any lump sum
transfers flow $\beta$ satisfying (\ref{eq: lump sum tax1}),
(\ref{eq: lump sum tax2}) and (\ref{eq: lump sum tax3})
decentralizes the desired plan and hence, unlike the capital income
taxation, the lump sum taxation is indeterminate.

Notice that the fiscal policy that decentralizes the plan $c^*$ is time consistent: even if
the government is given the opportunity to revise the path of fiscal instruments over time,
it has no interest in doing so in the absence of new information because the problem of time inconsistency is sorted out at the
{\it ex ante } stage of central planning.

Evaluating equation (\ref{eq:autonome2}) at the steady state of the
economy gives $\lim_{t \nearrow \infty} \psi(t)=\bar \psi = \delta -
f'(\bar k)
>0$ and therefore, the steady state tax rate becomes stationary and
it is given by
\begin{equation} \label{eq: capital income tax LT}
\lim_{t \nearrow \infty} \eta(t-n,t)  =\frac{1}{f'\left(\bar k
\right)} \left( f'\left(\bar k \right) -\delta -
\frac{\varphi'(n)}{\varphi(n)} \right).
\end{equation}

This result shows that it is required to tax (or subsidize) capital
income in the long term and it is in stark contrast with
representative infinitely lived agents models where the capital
interest taxation is not an optimal instrument in the long term.

The long term capital income tax rate (\ref{eq: capital income tax LT})
require to subsidize the old cohorts and tax the young cohorts. In
fact, it can be shown that there exist a threshold age $\tilde n$
that the government subsidizes (resp. taxes) capital income for
cohorts older (resp. younger) than $\tilde n$ years. For instance, if the planner decide to decentralize the l.r.p.
allocation converging to $\bar k$ with $f'(\bar k) = \rho\frac{\pi+\delta}{\pi+\rho}$, the cutoff age is given by
$$
\tilde n = - \frac{1}{\delta} \ln \left( \frac{\pi}{\delta + \pi} \right)
$$

Notice that if the intra period planner decides to use the
egalitarian allocation $\varphi (n) = \pi$, there is no need for an
age dependent capital income taxation. Under this assumption, the
government will simply use the long term uniform capital income
subsidy rate
$$
\bar \eta = \frac{\pi}{ \rho} \frac{\delta - \rho}{\pi + \rho}
$$
and finance this subsidy by lump sum transfers and/or by issuing bonds.

We must emphasize that if the government has access to a commitment
technologies, it will adopt an allocation consistent with the
asymptotic discount rate of the discount function (\ref{IEOG}), that
is $\rho$. The result of Calvo and Obstfeld \cite{CalvoObstfeld}
suggests then that long term capital income taxation is not
required. Therefore, in our model, the capital income taxation is
entirely driven by the time inconsistency friction faced by the
government.

\section{Conclusion\label{sec5}}

The central concern of this paper has been the attempt to define a
methodology to analyze a capital accumulation dynamic game with non
constant discount in continuous time and under the assumption of non
commitment. The methodology only requires standard differential
calculus techniques and the resulting equilibrium characterization
is a generalization of the HJB equation. Using a special fixed point
theorem, the central manifold theorem (Carr \cite{Carr}), we were
able to prove existence of multiple equilibria for a specification
of the discount function. We perceive the connection of the
methodology with the central manifold theorem as a good news: In
fact the central manifold theorem comes with an approximation method
(See Theorem 2 of Carr \cite{Carr}) that opens the door to
computational policy experimentation. Taken overall, the method
seems to be applicable to a broad set of time inconsistency
problems. We consider an application to the dynamic allocation
problem for a forward looking utilitarian government in an
overlapping generations economy. When the social discount rate and
the private discount rate are distinct, the optimal command is time
inconsistent and the government becomes strategic. We find that
there are multiple steady states for the economy but that the
(local) renegotiation-proofness requirement selects one of them. The
strategic allocation can be implemented in a market economy if the
government deploys distortional capital income taxation. Our main
point here is that the time inconsistency friction that arises from
the planning problem creates by itself a role for capital income
taxation. The analysis of our overlapping generations model has been
limited in several important respects. In particular, we did not
impose any restriction on the fiscal tools, neither directly as in
the Ramsey approach of optimal taxation nor indirectly as in the
Mirrlees approach to optimal taxation. The non restrictiveness of
the fiscal tools allowed us to achieve a separation between the
allocation problem and the taxation problem. As a result, we shut
down the time inconsistency generated by the interaction between the
government and the private agents. An unresolved question that we
plan to investigate in future work is to see how the preferences
based time inconsistency interacts with the time inconsistency due
instruments insufficiency. This can be done in the context of the
Mirrleesian approach to optimal taxation and the main difficulty is
to model a game with multiple private agents and multiple successive
governments.

\appendix

\section{Proof of Theorem \ref{Th: condition necessaire}\textit{\label{A}}}

\subsection{Preliminaries}

Before proceeding with the proof of the theorem, let us mention some facts
about the flow $\mathcal{K}$ defined by (\ref{eq: HJBid}), (\ref{eq: HJBidic}%
). To make notations simpler, we will use $\mathcal{K}\left(  t,k\right)  $
instead of $\mathcal{K}\left(  \sigma; t,k\right)  $ when the omission of the
dependency on $\sigma$ causes no ambiguity.

Note first that, since equation (\ref{eq: HJBid}) is autonomous, i.e. the
right-hand side does not depend explicitly on time, the solution which takes
the value $k$ at time $0$ coincides with the solution which takes the value
$\mathcal{K}\left(  t,k\right)  $ at time $t\geq0.$ This is the so-called
semi-group property, which is stated precisely as follows
\begin{equation}
\mathcal{K}\left(  s,\mathcal{K}\left(  t,k\right)  \right)  =\mathcal{K}%
\left(  s+t,k\right)  \label{Rdf}%
\end{equation}

Next, consider the linearized equation around a prescribed solution
$t\rightarrow\mathcal{K}\left(  t,k\right)  $ of the nonlinear system
(\ref{eq: HJBid}), namely:%

\begin{equation}
\frac{dk_{1}}{ds}=\left(  f^{\prime}\left(  \mathcal{K}\left(  s,k\right)
\right)  -\sigma^{\prime}\left(  \mathcal{K}\left(  s,k\right)  \right)
\right)  k_{1}\left(  s\right)  \label{a26}%
\end{equation}

This is a linear equation, so the flow is linear. The value at time $t$ of the
solution which takes the value $k$ at time $0\ $is $\mathcal{R}(t)k$, where
the function $\mathcal{R}:R \rightarrow(0,\infty)$ satisfies:
\begin{align}
\frac{d\mathcal{R}}{dt}  &  =\left(  f^{\prime}\left(  \mathcal{K}\left(
s,k\right)  \right)  -\sigma^{\prime}\left(  \mathcal{K}\left(  s,k\right)
\right)  \right)  \mathcal{R}\left(  t\right) \label{a31}\\
\mathcal{R}\left(  0\right)   &  =1. \label{a32}%
\end{align}

From standard theory, it is well known that, if $f$ and $\sigma$ are $C^{k}$,
then $\mathcal{K}$ is $C^{k-1}$, and:
\begin{align*}
\frac{\partial\mathcal{K}\left(  t,k\right)  }{\partial k}  &  =\mathcal{R}%
\left(  t\right) \\
\frac{\partial\mathcal{K}\left(  t,k\right)  }{\partial t}  &  =-\mathcal{R}%
(t)\left(  f\left(  k\right)  -\sigma\left(  k\right)  \right)  .
\end{align*}

Let us now turn to the actual proof of Theorem \ref{Th: condition necessaire}.

\subsection{Necessary condition}

Given a Markov equilibrium strategy $\sigma$, we define the associated value
function $v(k)$ as in formula (\ref{valdef})
\begin{equation}
v\left(  k\right)  :=\int_{0}^{\infty}h\left(  t \right)  u\left(
\sigma\left(  \mathcal{K}\left(  t,k\right)  \right)  \right)  dt \label{Vf}%
\end{equation}

Differentiating with respect to $k$, we find that:%
\begin{align*}
v^{\prime}\left(  k\right)   &  =\int_{0}^{\infty}h(t)u^{\prime}\left(
\sigma(\mathcal{K}\left(  t,k \right)  )\right)  \sigma^{\prime}\left(
\mathcal{K}\left(  t,k \right)  \right)  \frac{\partial\mathcal{K}}{\partial
k}\left(  t,k \right)  dt\\
&  =\int_{0}^{\infty}h(t)u^{\prime}\left(  \sigma(\mathcal{K}\left(  t,k
\right)  )\right)  \sigma^{\prime}\left(  \mathcal{K}\left(  t,k \right)
\right)  \mathcal{R}(t)dt
\end{align*}

Since $\sigma$ is an equilibrium strategy, the maximum of $P_{1}\left(
k,\sigma,c\right)  $ with respect to $c$ must be attained at $\sigma\left(
k\right)  $. The function $P_{1}$ itself is given by formula (\ref{a141}),
where $k_{0}(t) = \mathcal{K}\left(  t,k \right)  $ and where $k_{1}$ is
defined by (\ref{RI}) and (\ref{RCI}), so that $k_{1}(t)=\mathcal{R}(t)\left(
\sigma(k)-c\right)  $. Substituting into (\ref{a141}), we get:
\begin{align*}
P_{1}\left(  k,\sigma,c\right)   &  =u\left(  c\right)  -u\left(
\sigma(k)\right) \\
&  +\int_{0}^{\infty}h\left(  t\right)  u^{\prime}\left(  \sigma\left(
\mathcal{K}\left(  t,k \right)  \right)  \right)  \sigma^{\prime}\left(
\mathcal{K}\left(  t,k \right)  \right)  \mathcal{R}\left(  t\right)  \left(
\sigma(k)-c\right)  dt
\end{align*}

Since $u$ is concave and differentiable, the necessary and sufficient
condition to maximize $P_{1}(k,\sigma,c)$ with respect to $c$ is
\[
u^{\prime}(c)=\int_{0}^{\infty}h(t)u^{\prime}\left(  \sigma(\mathcal{K}\left(
t,k\right)  )\right)  \sigma^{\prime}\left(  \mathcal{K}\left(  t,k\right)
\right)  \mathcal{R}(t)dt
\]
which is precisely $v^{\prime}(t,k)$, as we just saw. Therefore, the
equilibrium strategy must satisfy
\[
u^{\prime}\left(  \sigma(k)\right)  =v^{\prime}(k)
\]
and, substituting back into equation (\ref{Vf}), we get equation (\ref{inf1}).

\subsection{Sufficient condition}

Assume now that there exists a function $v$ satisfying (\ref{inf1}), and
consider the strategy $\sigma=i\circ v^{\prime}$. Given any consumption choice
$c\in R$, the payoff to the decision-maker at time $0$ is
\begin{align*}
P_{1}\left(  k,\sigma,c\right)   &  =u\left(  c\right)  -u\left(
\sigma(k)\right) \\
&  +\int_{0}^{\infty}h\left(  t\right)  u^{\prime}\left(  \sigma\left(
\mathcal{K}\left(  t,k \right)  \right)  \right)  \sigma^{\prime}\left(
\mathcal{K}\left(  t,k \right)  \right)  \mathcal{R}\left(  t\right)  \left(
\sigma(k)-c\right)  dt\\
&  =u\left(  c\right)  -u\left(  \sigma(k)\right)  +v^{\prime}(k)\left(
\sigma\left(  k\right)  -c\right) \\
&  =u\left(  c\right)  -u\left(  \sigma(k)\right)  -u^{\prime}\left(
\sigma\left(  k\right)  \right)  \left(  c-\sigma\left(  k\right)  \right) \\
&  \leq0,
\end{align*}
where the first equality follows from the definition of $\mathcal{R}$, the
second equality is obtained by differentiating $v$ with respect to $k$, the
third equality follows from the definition of $\sigma$, and the last
inequality is due to the concavity of $u$. Observing that $P_{1}%
(k,\sigma,\sigma\left(  k\right)  )=0$, we see that the inequality
$P_{1}(k,\sigma,c)\leq0$ implies that $c=\sigma\left(  k\right)  $ achieves
the maximum so that $\sigma$ is an equilibrium strategy.

\section{Proof of Proposition \ref{Proposition: IE / DE}\label{B}}

Let a function $v:R \rightarrow R $ be given. Consider the function$\varphi:R
\rightarrow R $ defined by
\begin{equation}
\varphi\left(  k\right)  =v\left(  k\right)  -\int_{0}^{\infty}h(t)u\left(
\sigma\left(  \mathcal{K}\left(  \sigma;t,k\right)  \right)  \right)  dt
\label{eq6}%
\end{equation}
where $\sigma\left(  k\right)  =i\left(  v^{\prime}\left(  k\right)  \right)
$. Consider $\psi\left(  t,k\right)  $, the value of $\varphi$ along the
trajectory $t\longrightarrow\mathcal{K}\left(  \sigma;t,k\right)  $
originating from $k$ at time $0$, that is
\begin{align*}
\psi\left(  t,k\right)   &  =\varphi\left(  \mathcal{K}\left(  t,k\right)
\right) \\
&  =v\left(  \mathcal{K}\left(  t,k\right)  \right)  -\int_{0}^{\infty
}h(s)u\left(  \sigma(\mathcal{K}\left(  s,\mathcal{K}\left(  t,k\right)
\right)  )\right)  ds\\
&  =v\left(  \mathcal{K}\left(  t,k\right)  \right)  -\int_{0}^{\infty
}h(s)u\left(  \sigma(\mathcal{K}\left(  s+t,k\right)  )\right)  ds\\
&  =v\left(  \mathcal{K}\left(  t,k\right)  \right)  -\int_{t}^{\infty
}h(s-t)u\left(  \sigma(\mathcal{K}\left(  s,k\right)  )\right)  ds
\end{align*}
where we have used formula (\ref{Rdf}).

We compute the derivative of this function with respect to $t$:%
\begin{align*}
\frac{\partial\psi}{\partial t}\left(  k,t\right)   &  =v^{\prime}\left(
\mathcal{K}\left(  t,k\right)  \right)  \left[  f\left(  \mathcal{K}\left(
t,k\right)  \right)  -i\left(  \sigma\left(  \mathcal{K}\left(  t,k\right)
\right)  \right)  \right] \\
&  +u\left(  \sigma\left(  \mathcal{K}\left(  t,k\right)  \right)  \right)
+\int_{t}^{\infty}h^{\prime}(s-t)u\left(  \sigma(\mathcal{K}\left(
s,k\right)  )\right)  ds
\end{align*}

From the definition of $i$, we have
\[
u\left(  i\left(  v^{\prime}\left(  \mathcal{K}\left(  t,k\right)  \right)
\right)  \right)  -v^{\prime}\left(  \mathcal{K}\left(  t,k\right)  \right)
i\left(  v^{\prime}\left(  \mathcal{K}\left(  t,k\right)  \right)  \right)
=\sup_{c}\left\{  u\left(  c\right)  -v^{\prime}\left(  \mathcal{K}\left(
t,k\right)  \right)  c \right\}
\]

Substituting in the preceding equation, and recalling that $\sigma=i\circ
v^{\prime}$ gives
\begin{align*}
\frac{\partial\psi}{\partial t}\left(  k,t\right)   &  =\sup_{c}\left\{
u\left(  c\right)  +v^{\prime}\left(  \mathcal{K}\left(  t,k\right)  \left(
f\left(  \mathcal{K}\left(  t,k\right)  \right)  -c\right)  \right)  \right\}
+\int_{t}^{\infty}h^{\prime}(s-t)u\left(  \sigma(\mathcal{K}\left(
s,k\right)  )\right)  ds\\
&  = \sup_{c}\left\{  u\left(  c\right)  +v^{\prime}\left(  \mathcal{K}\left(
t,k\right)  \left(  f\left(  \mathcal{K}\left(  t,k\right)  \right)
-c\right)  \right)  \right\}  +\int_{0}^{\infty}h^{\prime}(s)u\left(
\sigma(\mathcal{K}\left(  s, \mathcal{K}\left(  t,k\right)  \right)  )\right)
ds
\end{align*}
where the second equality is obtained by using a change of variable and
formula (\ref{Rdf}). If (\ref{inf2}) holds, then the right-hand side of the last
equation is identically zero along the trajectory, so that $\psi\left(
k,t\right)  =\psi\left(  k\right)  $ does not depend on $t$. Letting
$t\longrightarrow\infty$ in the definition of $\psi$, we get:%
\begin{align*}
\psi\left(  k\right)   &  =\lim_{t\longrightarrow\infty}\left\{  v\left(
\mathcal{K}\left(  t,k\right)  \right)  -\int_{0}^{\infty}h(s)u\left(
\sigma(\mathcal{K}\left(  s+t,k\right)  )\right)  ds\right\} \\
&  =v\left(  \bar{k}\right)  -\int_{0}^{\infty}h\left(  s\right)  u\left(
\sigma\left(  \bar{k}\right)  )\right)  ds =v\left(  \bar{k}\right)  -
u\left(  f \left(  \bar{k}\right)  \right)  \int_{0}^{\infty}h\left(
s\right)  ds
\end{align*}
and hence, if (\ref{BCChar}) holds then, $\psi=\varphi=0$ and equation (\ref{inf1}) holds.

Conversely, if $v\left(  k\right)  $ satisfies equation (\ref{inf1}), then the
same lines of reasoning shows that equation (\ref{inf2}) and the boundary condition
are satisfied.

\section{Proof or Proposition \ref{prop1}}

The proof uses the following:

\begin{lemma}
Let $\sigma\left(  k\right)  $ be any convergent Markov strategy. Denote its
steady state by $\bar{k}$. Let $h_{0}:\left[  0,\ \infty\right]
\longrightarrow R$ be any $C^{1}$ function with exponential decay at infinity,
that is
\[
h_{0}(t) \leq C e^{- \nu t}
\]
for some positive constants $\nu>0$ and $C\geq0$. A $C^{1}$ function $I$
satisfies
\begin{equation}
I^{\prime}\left(  k\right)  \left(  f\left(  k\right)  -\sigma\left(
k\right)  \right)  +\int_{0}^{\infty}h_{0}^{\prime}\left(  t\right)
\ln\left(  \sigma\left(  \mathcal{K}\left(  \sigma;t,k\right)  \right)
\right)  dt+h_{0}\left(  0\right)  \ln\left(  \sigma\left(  k\right)  \right)
=0 \label{eq11}%
\end{equation}
for all $k$ and, the boundary condition
\begin{equation}
I\left(  \bar{k}\right)  =\int_{0}^{\infty}h_{0}\left(  t\right)  \ln f\left(
\bar{k}\right)  \label{eq11BC}%
\end{equation}
if and only if it satisfies
\begin{equation}
I(k)=\int_{0}^{\infty}h_{0}\left(  t\right)  \ln\left(  \sigma\left(
\mathcal{K}\left(  \sigma;t,k\right)  \right)  \right)  dt. \label{eq10}%
\end{equation}

\end{lemma}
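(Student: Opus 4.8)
The plan is to follow the template of Appendix \ref{B}, replacing $u$ by $\ln$, the discount function $h$ by the $C^{1}$ function $h_{0}$, and $v$ by $I$; the present lemma is exactly the ``integrated versus differentiated'' equivalence one needs in order to pass between the integral representations (\ref{eq1})--(\ref{eq2}) and the ODE system (\ref{eq:ODE1})--(\ref{eq:ODE2}) in Proposition \ref{prop1}. Write $g(k):=\int_{0}^{\infty}h_{0}(t)\ln\left(\sigma\left(\mathcal{K}\left(\sigma;t,k\right)\right)\right)dt$ for the right-hand side of (\ref{eq10}). Since $\sigma$ is convergent, $\ln\left(\sigma\left(\mathcal{K}\left(\sigma;t,k\right)\right)\right)$ is bounded in $t$ and tends to $\ln f(\bar k)$; combined with the bound $h_{0}(t)\le Ce^{-\nu t}$ this legitimizes differentiating $g$ under the integral sign and makes the non-local integral occurring in (\ref{eq11}) convergent (in our applications $h_{0}$ is a finite combination of exponentials, so its derivative decays exponentially too).

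First I would verify that $g$ itself satisfies (\ref{eq11}) and (\ref{eq11BC}). Using the semigroup property (\ref{Rdf}) and the change of variable $s\mapsto s-t$, the composition $g\left(\mathcal{K}(\sigma;t,k)\right)$ rewrites as $\int_{t}^{\infty}h_{0}(s-t)\ln\left(\sigma\left(\mathcal{K}(\sigma;s,k)\right)\right)ds$. Differentiating this in $t$ by the Leibniz rule, evaluating at $t=0$, and using $\frac{\partial\mathcal{K}}{\partial t}(\sigma;0,k)=f(k)-\sigma(k)$ from (\ref{eq: HJBid}), one obtains
\[
g'(k)\left(f(k)-\sigma(k)\right)+\int_{0}^{\infty}h_{0}'(s)\ln\left(\sigma\left(\mathcal{K}(\sigma;s,k)\right)\right)ds+h_{0}(0)\ln\left(\sigma(k)\right)=0,
\]
which is (\ref{eq11}) (the point $k$ being arbitrary). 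For (\ref{eq11BC}) I would note that $\bar k$ is a steady state, so $\mathcal{K}(\sigma;t,\bar k)=\bar k$ for all $t$ and $\sigma(\bar k)=f(\bar k)$, whence $g(\bar k)=\int_{0}^{\infty}h_{0}(t)\ln f(\bar k)\,dt$. This settles the direction ``(\ref{eq10}) $\Rightarrow$ (\ref{eq11}) and (\ref{eq11BC})''.

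For the converse, given a $C^{1}$ function $I$ satisfying (\ref{eq11}) everywhere and the boundary condition (\ref{eq11BC}), I would set $\varphi:=I-g$ and subtract the instance of (\ref{eq11}) just proved for $g$ from the one assumed for $I$. The non-local terms and the $h_{0}(0)\ln\sigma$ terms cancel, leaving $\varphi'(k)\left(f(k)-\sigma(k)\right)=0$ for all $k$; equivalently $\frac{d}{dt}\,\varphi\left(\mathcal{K}(\sigma;t,k)\right)=0$, so $\varphi$ is constant along every trajectory of (\ref{eq: HJBid}). Since $\sigma$ is convergent, the trajectory from $k$ (for $k$ in the neighbourhood of $\bar k$ on which the statement lives) tends to $\bar k$, so by continuity $\varphi(k)=\lim_{t\to\infty}\varphi\left(\mathcal{K}(\sigma;t,k)\right)=\varphi(\bar k)$; and $\varphi(\bar k)=I(\bar k)-g(\bar k)=0$ by (\ref{eq11BC}) together with the value of $g(\bar k)$ just computed. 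Hence $\varphi\equiv0$, i.e. $I=g$, which is (\ref{eq10}).

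The algebra (Leibniz rule, the change of variables, the semigroup identity) is routine. The only points genuinely needing care are the analytic justifications collected in the first paragraph — convergence of the improper integrals and the interchange of $\frac{d}{dt}$ with $\int_{0}^{\infty}$ — which I would dispatch by producing an integrable majorant, uniform on a neighbourhood of the fixed $k$, from the exponential decay of $h_{0}$ (and $h_{0}'$) against the bounded, convergent factor $\ln\left(\sigma\left(\mathcal{K}(\sigma;t,k)\right)\right)$ and invoking dominated convergence. A minor caveat worth stating is that ``for all $k$'' should be read as ``for $k$ in the basin of attraction of $\bar k$'', which is all that Proposition \ref{prop1} uses.
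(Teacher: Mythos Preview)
Your proof is correct and follows essentially the same route as the paper. The paper packages your $\varphi=I-g$ into a two-variable function $\psi(k,t)=I(\mathcal{K}(\sigma;t,k))-\int_t^\infty h_0(s-t)\ln\sigma(\mathcal{K}(\sigma;s,k))\,ds$ and shows $\partial\psi/\partial t$ equals the left side of (\ref{eq11}) evaluated along the flow, but by the semigroup property $\psi(k,t)=\varphi(\mathcal{K}(\sigma;t,k))$, so the paper's ``$\psi$ independent of $t$, take $t\to\infty$'' is exactly your constancy-along-trajectories step.
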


\begin{proof}
We argue as in the preceding proof. For any $C^{1}$ function $I$, consider the
function $\psi\left(  k,t\right)  $ defined by
\[
\psi\left(  \sigma; k,t\right)  =I\left(  \mathcal{K}\left(  \sigma
;t,k\right)  \right)  -\int_{t}^{\infty}h_{0}(s-t) \ln\left(  \sigma
(\mathcal{K}\left(  \sigma;s,k\right)  )\right)  ds.
\]

Differentiating with respect to $t$, we get%
\begin{align*}
\frac{\partial\psi}{\partial t}  &  =I^{\prime}\left(  \mathcal{K}\left(
\sigma;t,k\right)  \right)  \left(  f\left(  \mathcal{K}\left(  \sigma
;t,k\right)  -\sigma\left(  \mathcal{K}\left(  \sigma;t,k\right)  \right)
\right)  \right) \\
&  +\int_{t}^{\infty}h_{0}^{\prime}(s-t) \ln\left(  \sigma(\mathcal{K}\left(
\sigma;s,k\right)  )\right)  ds+h_{0}\left(  0\right)  \ln\left(
\sigma(\mathcal{K}\left(  \sigma;t,k\right)  ) \right)  .
\end{align*}

Using a change of variable and equation (\ref{Rdf}), notice that
\[
\psi\left(  \sigma; k,t\right)  =I\left(  \mathcal{K}\left(  \sigma
;t,k\right)  \right)  -\int_{0}^{\infty}h_{0}(s) \ln\left(  \sigma
(\mathcal{K}\left(  \sigma;s, \mathcal{K}\left(  \sigma; t,k \right)  \right)
)\right)  ds.
\]
Therefore, if (\ref{eq10}) holds, then $\psi$ is identically zero, and so is
its derivative $\frac{\partial\psi}{\partial t}$, so (\ref{eq11})\ holds.
Conversely, if (\ref{eq11}) holds, then $\frac{\partial\psi}{\partial t}$
vanishes, and $\psi\left(  k,t\right)  =\psi\left(  k\right)  $ does not
depend on $t$, so that
\begin{align*}
\psi\left(  k\right)   &  =\lim_{t\longrightarrow\infty}\left\{  I\left(
\mathcal{K}\left(  \sigma;t,k\right)  \right)  -\int_{0}^{\infty}h_{0}(s)
\ln\left(  \sigma(\mathcal{K}\left(  \sigma;s, \mathcal{K}\left(  \sigma; t,k
\right)  \right)  )\right)  ds\right\} \\
&  =I\left(  \bar{k}\right)  -\int_{0}^{\infty}h_{0}\left(  s\right)
\ln\left(  \sigma(\bar{k})\right)  dt.
\end{align*}
If in addition, (\ref{eq11BC}) holds then (\ref{eq10}) holds.
\end{proof}

Now let us turn to the the proof of Proposition \ref{prop1}. To simplify the
notations, we shall write $h\left(  t\right)  $ in the following way
\[
h\left(  t\right)  =\theta e^{-(\delta+\pi)t}+\left(  1-\theta\right)
e^{-\rho t}%
\]
where
\[
\theta=\frac{\delta-\rho}{\pi+\delta-\rho},\ 1-\theta=\frac{\pi}{\pi
+\delta-\rho}%
\]

Suppose that $\sigma$ is an equilibrium strategy converging to $\bar k$.
Proposition \ref{Proposition: IE / DE} shows that the value $v$ defined by
(\ref{eq1}) satisfies (\ref{inf2}), (\ref{BCChar}). After performing the optimization in $c$ and
substituting $u(c)=\ln(c)$, the right hand side of (\ref{inf2}) becomes the left hand
side of (\ref{eq:ODE1}). Using the definitions of $v$ and $w$ given by
(\ref{eq1}) and (\ref{eq2}), it can be checked that the left hand side of (\ref{inf2})
coincides with the linear combination of $v$ and $w$ given in the right hand
side of (\ref{eq:ODE1}). Therefore, we proved equation (\ref{eq:ODE1}). The
boundary condition (\ref{BC1}) is proved by integrating (\ref{eq1}) after
replacing the consumption flow $\sigma\left(  \mathcal{K}(\sigma;t,k) \right)
$ by the steady state consumption level $f \left(  \bar k \right)  $.



To get equation (\ref{eq:ODE2}), we use the preceding Lemma with
$\sigma\left(  k\right)  =i\left(  v^{\prime}\left(  k\right)  \right)
=1/v^{\prime}\left(  k\right)  $, $h_{0}(t)=(1-\theta)\left(  e^{-\rho
t}-e^{-(\delta+\pi)t}\right)  $ and we get an equation for $I(k)=w(k)$ given
by%
\begin{equation}
w^{\prime}\left(  k\right)  \left(  f\left(  k\right)  -\frac{1}{v^{\prime
}\left(  k\right)  }\right)  =-\left(  1-\theta\right)  \int_{0}^{\infty
}\left(  \left(  \delta+\pi\right)  e^{-(\delta+\pi)t}-\rho e^{-\rho
t}\right)  u\left(  \sigma\left(  \mathcal{K}\left(  \sigma;t,k\right)
\right)  \right)  dt \label{eq12}%
\end{equation}
and a boundary condition
\[
w\left(  \bar{k}\right)  =\left(  1-\theta\right)  \left(  \frac{1}{\rho
}-\frac{1}{\delta+\pi}\right)  \ln f\left(  \bar{k}\right)  =\frac{\pi}%
{\rho\left(  \delta+\pi\right)  }\ln f\left(  \bar{k}\right)  .
\]
The right hand side of (\ref{eq12}) can be written as the linear combination
$-\pi v(k)+(\rho+\pi)w(k)$ and thus, we proved equation (\ref{eq:ODE2}).


Conversely, suppose $v_{1}$ and $w_{1}$ satisfy the equations(\ref{eq:ODE1})
and (\ref{eq:ODE2}), together with the boundary conditions (\ref{BC1}) and
(\ref{BC2}), with the strategy $\sigma_{1}=1/v_{1}^{\prime}$ converging to
$\bar{k}$, so that%
\begin{equation}%
\begin{array}
[c]{c}%
\left(  f-\frac{1}{v_{1}^{\prime}}\right)  v_{1}^{\prime}-\ln\left(
v_{1}^{\prime}\right)  =\delta v_{1} - (\delta- \rho) w_{1}\\
v_{1}\left(  \bar{k}\right)  =\left(  \frac{\theta}{\delta+\pi}+\frac
{1-\theta}{\rho}\right)  \ln f\left(  \bar{k}\right) \\
\left(  f-\frac{1}{v_{1}^{\prime}}\right)  w_{1}^{\prime}= - \pi v_{1} +
(\rho+ \pi) w_{1}\\
w_{1}\left(  \bar{k}\right)  =\left(  1-\theta\right)  \left(  \frac{1}{\rho}
- \frac{1}{\delta+\pi}\right)  \ln f\left(  \bar{k}\right)
\end{array}
\label{eq20}%
\end{equation}

Consider the functions:%
\begin{align}
v_{2}\left(  k\right)   &  =\int_{0}^{\infty}\left(  \theta e^{-(\delta+\pi
)t}+\left(  1-\theta\right)  e^{-\rho t}\right)  \ln\left(  \sigma_{1}\left(
\mathcal{K}\left(  \sigma_{1};t,k\right)  \right)  \right)  dt\label{eq22}\\
w_{2}\left(  k\right)   &  =\left(  1-\theta\right)  \int_{0}^{\infty}\left(
-e^{-(\delta+\pi)t}+e^{-\rho t}\right)  \ln\left(  \sigma_{1}\left(
\mathcal{K}\left(  \sigma_{1};t,k\right)  \right)  \right)  dt \label{eq23}%
\end{align}
Applying the preceding\ Lemma with $I=v_{2}$ and $I=w_{2}$ successively, we have%

\[%
\begin{array}
[c]{c}%
v_{2}^{\prime}\left(  k\right)  \left(  f-\sigma_{1}\right)  +\int_{0}%
^{\infty}\left(  \theta(\delta+\pi)e^{-(\delta+\pi)t}+\left(  1-\theta\right)
\rho e^{-\rho t}\right)  \ln\left(  \sigma_{1}\left(  \mathcal{K}\left(
\sigma_{1};t,k\right)  \right)  \right)  dt+\ln\left(  \sigma_{1}\left(
k\right)  \right)  =0\\
v_{2}\left(  \bar{k}\right)  =\left(  \frac{\theta}{\delta+\pi}+\frac
{1-\theta}{\rho}\right)  \ln f\left(  \bar{k}\right) \\
w_{2}^{\prime}\left(  k\right)  \left(  f-\sigma_{1}\right)  +\left(
1-\theta\right)  \int_{0}^{\infty}\left(  -e^{-(\delta+\pi)t}+e^{-\rho
t}\right)  \ln\left(  \sigma_{1}\left(  \mathcal{K}\left(  \sigma
_{1};t,k\right)  \right)  \right)  dt=0\\
w_{2}\left(  \bar{k}\right)  =\left(  1-\theta\right)  \left(  \frac{1}{\rho}
- \frac{1}{\delta+\pi} \right)  \ln f\left(  \bar{k}\right)
\end{array}
\]
and hence
\begin{equation}%
\begin{array}
[c]{c}%
v_{2}^{\prime}\left(  k\right)  \left(  f-\sigma_{1}\right)  +\ln\left(
\sigma_{1}\left(  k\right)  \right)  =\delta v_{2}- (\delta- \rho) w_{2}\\
w_{2}^{\prime}\left(  f-\sigma_{1}\right)  = - \pi v_{2}+ (\rho+ \pi) w_{2}%
\end{array}
\label{eq21}%
\end{equation}

Substracting (\ref{eq21}) from (\ref{eq20}), and setting $v=v_{1}%
-v_{2},\ w=w_{1}-w_{2}$, we get:%
\begin{equation}%
\begin{array}
[c]{c}%
\left(  f-\frac{1}{v_{1}^{\prime}}\right)  v^{\prime}=\delta v_{2}- (\delta-
\rho) w_{2}\\
v\left(  \bar{k}\right)  =0\\
\left(  f-\frac{1}{v_{1}^{\prime}}\right)  w^{\prime}=- \pi v_{2}+ (\rho+ \pi)
w_{2}\\
w\left(  \bar{k}\right)  =0
\end{array}
\label{eq25}%
\end{equation}

Obviously $v=w=0$ is a solution. In the next Lemma, we show that it is the
only one, so $v_{1}=v_{2}$ and $w_{1}=w_{2}$. Equation (\ref{eq22}) then
becomes:%
\[
v_{1}\left(  k\right)  =\int_{0}^{\infty}\left(  \theta e^{-(\delta+\pi
)t}+\left(  1-\theta\right)  e^{-\rho t}\right)  \ln\left(  \sigma_{1}\left(
\mathcal{K}\left(  \sigma_{1};t,k\right)  \right)  \right)  dt
\]
which is precisely equation (IE) for $h\left(  t\right)  = \left(  \theta
e^{-(\delta+\pi)t}+\left(  1-\theta\right)  e^{-\rho t}\right)  $ and
$u\left(  c\right)  =\ln c$. Since $v_{1}$ satisfies (IE), the strategy
$\sigma_{1}$ is then an equilibrium strategy.

\begin{lemma}
If $\left(  v,w\right)  $ is a pair of functions, which are continuous on a
neighbourhood $\Omega$ of $\bar{k}$, continously differentiable for $k\neq
\bar{k}$, and which solve (\ref{eq25}) for $k\neq\bar{k}$, then $v=0$ and
$w=0$
\end{lemma}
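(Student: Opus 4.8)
The plan is to remove the singularity of the coefficient $f-\sigma_1$ appearing in (\ref{eq25}) by integrating along the flow of $\dot k=f(k)-\sigma_1(k)$. Travelling along trajectories converts the singular system into a \emph{constant}-coefficient linear system of ODEs in the time variable whose matrix has only strictly positive eigenvalues; such a system has no nonzero solution tending to $0$, yet the boundary conditions $v(\bar k)=w(\bar k)=0$ force exactly that.

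First I would shrink $\Omega$ to an interval contained in the basin of attraction of $\bar k$ for $\dot k=f(k)-\sigma_1(k)$, which is possible because $\sigma_1$ converges to $\bar k$; in one dimension a trajectory starting in such an interval is monotone and therefore stays in $\Omega$. Fix $k_0\in\Omega$ with $k_0\neq\bar k$ (the case $k_0=\bar k$ being trivial). By uniqueness for the autonomous equation the constant trajectory $\bar k$ is met by no other solution, so $\mathcal K(\sigma_1;t,k_0)\neq\bar k$ for all $t\geq 0$; since $v$ and $w$ are $C^{1}$ off $\bar k$, the functions $V(t):=v(\mathcal K(\sigma_1;t,k_0))$ and $W(t):=w(\mathcal K(\sigma_1;t,k_0))$ are differentiable for every $t\geq 0$. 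Differentiating and substituting $\tfrac{d}{dt}\mathcal K=f(\mathcal K)-\sigma_1(\mathcal K)$ into the two equations of (\ref{eq25}) (whose right-hand sides, in the difference variables $v=v_1-v_2$, $w=w_1-w_2$, are the homogeneous expressions $\delta v-(\delta-\rho)w$ and $-\pi v+(\rho+\pi)w$) yields $\dot V=\delta V-(\delta-\rho)W$, $\dot W=-\pi V+(\rho+\pi)W$.

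The key step is then the spectrum of the matrix $A=\left(\begin{smallmatrix}\delta & -(\delta-\rho)\\ -\pi & \rho+\pi\end{smallmatrix}\right)$: its characteristic polynomial is $\lambda^{2}-(\delta+\pi+\rho)\lambda+\rho(\delta+\pi)$, hence its eigenvalues are $\delta+\pi$ and $\rho$, both strictly positive. Consequently every nonzero solution of $\dot V=\delta V-(\delta-\rho)W$, $\dot W=-\pi V+(\rho+\pi)W$ has $\sqrt{V(t)^{2}+W(t)^{2}}\to\infty$ as $t\to\infty$. But $\mathcal K(\sigma_1;t,k_0)\to\bar k$, so continuity of $v,w$ at $\bar k$ together with $v(\bar k)=w(\bar k)=0$ force $V(t)\to 0$ and $W(t)\to 0$. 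Hence $(V(0),W(0))=(v(k_0),w(k_0))=(0,0)$, and since $k_0\in\Omega$ was arbitrary, $v\equiv 0$ and $w\equiv 0$ on $\Omega$.

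I expect the main obstacle to be purely the bookkeeping at the singular point $\bar k$: one must ensure the trajectory through any $k_0\neq\bar k$ never reaches $\bar k$ (so that $V,W$ are differentiable for all $t$, the equations of (\ref{eq25}) being available only for $k\neq\bar k$) and that it never leaves $\Omega$. Both are handled by uniqueness of solutions of the autonomous equation together with one-dimensional monotonicity of the flow, which rely on $\sigma_1$ converging to $\bar k$. Everything else reduces to the elementary eigenvalue computation above and the standard long-time behaviour of linear autonomous systems with positive spectrum.
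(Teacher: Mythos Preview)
Your argument is correct and is, at bottom, the same computation as the paper's: both identify the matrix $A=\left(\begin{smallmatrix}\delta & -(\delta-\rho)\\ -\pi & \rho+\pi\end{smallmatrix}\right)$, compute its eigenvalues $\rho$ and $\delta+\pi$, and exploit their strict positivity. The only difference is the choice of independent variable. The paper keeps $k$ as the independent variable, diagonalises to get $\varphi(k)V'=\rho V$, integrates explicitly to $V(k)=C_1\exp\bigl(\rho\int_{k_0}^{k}\varphi(l)^{-1}\,dl\bigr)$, and then argues that the integral diverges to $+\infty$ as $k\to\bar k$, forcing $C_1=0$. You instead reparametrise by flow time, which turns the singular system into the constant-coefficient system $\dot X=AX$ and replaces the divergence-of-integral argument by the simpler observation that $k(t)\to\bar k$ while all nonzero solutions of $\dot X=AX$ blow up. The two are literally the same statement in different coordinates, since $t=\int_{k_0}^{k}\varphi(l)^{-1}\,dl$; your version is a bit cleaner because it sidesteps the sign analysis of $\int dl/\varphi$ and uses directly the hypothesis that $\sigma_1$ converges to $\bar k$.
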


\begin{proof}
Set $f\left(  k\right)  -1/v_{1}^{\prime}\left(  k\right)  =\varphi\left(
k\right)  $. Note that $\varphi\left(  k\right)  \rightarrow0$ when
$k\rightarrow\bar{k}$. Since $v_{1}$ is $C^{1}$, in fact $C^{2}$, and
$v_{1}^{\prime}\left(  \bar{k}\right)  \neq0$, the value $\varphi\left(
k\right)  $ changes signs when $k$ crosses $\bar{k}$. The system can be
rewritten as:%
\[
\left(
\begin{array}
[c]{c}%
\varphi v^{\prime}\\
\varphi w^{\prime}%
\end{array}
\right)  =\left(
\begin{array}
[c]{cc}%
\delta & -(\delta-\rho)\\
-\pi & \rho+\pi
\end{array}
\right)  \left(
\begin{array}
[c]{c}%
v\\
w
\end{array}
\right)
\]

The characteristic equation of the matrix on the right-hand side is:%
\[
\lambda^{2}-\left(  \rho+\delta+\pi\right)  \lambda+\rho\left(  \delta
+\pi\right)  =0
\]
and the roots are $\lambda=\rho$ and $\lambda=\delta+\pi$. Changing basis, we
can rewrite the system as:%
\[
\left(
\begin{array}
[c]{c}%
\varphi V^{\prime}\\
\varphi W^{\prime}%
\end{array}
\right)  =\left(
\begin{array}
[c]{cc}%
\rho & 0\\
0 & \delta+\pi
\end{array}
\right)  \left(
\begin{array}
[c]{c}%
V\\
W
\end{array}
\right)
\]
where $V$ and $W$ are suitable linear combinations of $v$ and $w$. The
solutions are
\[
V\left(  k\right)  =C_{1}\exp\left(  \rho\int_{k_{0}}^{k}\frac{1}%
{\varphi\left(  l\right)  }dl\right)  ,\ \ W\left(  k\right)  =C_{2} \exp \left(
\left(  \delta+\pi\right)  \int_{k_{0}}^{k}\frac{1}{\varphi\left(  l\right)
}dl\right)
\]
where $C_{1}$, $C_{2}$ and $k_{0}$ are constants. Since $1/\varphi\left(
k\right)  \rightarrow\pm\infty$ when $k\rightarrow\bar{k}$, and both signs
occur, one for $k<\bar{k}$ and the other for $k>\bar{k}$, the only way we can
get $V\left(  \bar{k}\right)  =W\left(  \bar{k}\right)  =0$ is by setting
$C_{1}=C_{2}=0$.
\end{proof}

\section{Proof of Theorem \ref{Th: Existence and multiplicity}}

By Proposition \ref{prop1}, it is enough to show that the system
(\ref{eq:ODE1})-(\ref{eq:ODE2}) with boundary conditions (\ref{BC1}%
)-(\ref{BC2}), has a solution $\left(  v,w\right)  $ where $v$ and $w$ are
required to be $C^{2}$ and such that the resulting capital stock
\[
\frac{dk}{dt}=f\left(  k\right)  -\frac{1}{v^{\prime}\left(  k\right)
},~~k(0)=k_{0}
\]
has the property that $\lim_{t \rightarrow\infty} k\left(  t\right)  = \bar
{k}$ when $k_{0}$ is sufficiently close to $\bar k$.

We will give the proof in several steps. First, we make a sequence of change
of variables and change of coordinates that simplify the system (\ref{eq:ODE1}%
)-(\ref{eq:ODE2}). Second, we apply the Central Manifold Theorem (e.g.
\cite{Carr}) to show existence. Third, we conclude the proof by demonstrating
the estimate of the steady states ${\bar k}$ given in Theorem
\ref{Th: Existence and multiplicity}.

\subsection{Change of variables}

Let us begin with the following useful lemma which proof is omitted.

\begin{lemma}
\label{Lemma: variable X} The equation $x-\ln\left(  1+x\right)
=\mu$ has no solution for $\mu<0$. For $\mu=0$, the solution is
$x=0$. For $\mu>0$, there are two solutions $X_{1}\left(  \mu\right)
$ and $X_{2}\left(  \mu\right)  $ with the following properties:

\begin{description}
\item[(a)] $X_{1}$ is decreasing, $X_{1}\left(  0\right)  =0$, and
$X_{1}\left(  \mu\right)  \longrightarrow-1$ when $\mu\longmapsto\infty$

\item[(b)] $X_{2}$ is increasing, $X_{2}\left(  0\right)  =0$, and
$X_{2}\left(  \mu\right)  \longrightarrow\infty$ when $\mu\longmapsto\infty$

\item[(c)] both $X_{1}$ and $X_{2}$ are continuous on $\left[  0,\ \infty
\right]  $ and $C^{\infty}$ on $\left(  0,\ \infty\right)  $
\end{description}
\end{lemma}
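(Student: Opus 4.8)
The plan is to reduce everything to the shape of the one–variable function $g(x) = x - \ln(1+x)$ on its natural domain $x > -1$. First I would compute $g'(x) = 1 - \frac{1}{1+x} = \frac{x}{1+x}$, which is negative on $(-1,0)$, zero at $x = 0$, and positive on $(0,\infty)$. Hence $g$ is strictly decreasing on $(-1,0)$ and strictly increasing on $(0,\infty)$, with a unique global minimum at $x = 0$, where $g(0) = 0$. Next I would record the boundary behaviour: as $x \to -1^{+}$ we have $\ln(1+x) \to -\infty$, so $g(x) \to +\infty$, and as $x \to +\infty$ the term $x$ dominates $\ln(1+x)$, so again $g(x) \to +\infty$.

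With these observations the solution count is immediate. Since $g(x) \geq 0$ everywhere, the equation $g(x) = \mu$ has no solution for $\mu < 0$, and for $\mu = 0$ the strict minimality of $x = 0$ gives the unique solution $x = 0$. For $\mu > 0$ the restriction $g|_{(-1,0)}$ is a continuous strictly decreasing bijection onto $(0,\infty)$ and $g|_{(0,\infty)}$ is a continuous strictly increasing bijection onto $(0,\infty)$; by the intermediate value theorem each branch contributes exactly one preimage of $\mu$, and these are the only solutions. I would then \emph{define} $X_{1}(\mu)$ to be the solution in $(-1,0)$ and $X_{2}(\mu)$ the one in $(0,\infty)$, extended by $X_{1}(0) = X_{2}(0) = 0$.

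The stated properties of $X_{1}$ and $X_{2}$ then follow by inverting $g$ on each branch. Since $X_{1}$ is the inverse of the decreasing bijection $g|_{(-1,0)} \colon (-1,0) \to (0,\infty)$, it is decreasing on $(0,\infty)$ with $X_{1}(\mu) \to 0$ as $\mu \to 0^{+}$ and $X_{1}(\mu) \to -1$ as $\mu \to \infty$, which is (a); symmetrically, $X_{2}$ is the inverse of $g|_{(0,\infty)}$, hence increasing with $X_{2}(\mu) \to 0$ as $\mu \to 0^{+}$ and $X_{2}(\mu) \to \infty$ as $\mu \to \infty$, which is (b). For smoothness, $g$ is $C^{\infty}$ on $(-1,\infty)$ and $g'$ vanishes only at $x = 0$, so the inverse function theorem applied on each open branch gives $X_{1}, X_{2} \in C^{\infty}(0,\infty)$; continuity on the closed interval $[0,\infty]$ is just the fact that a monotone inverse of a continuous monotone function is continuous, which together with the limit computations above closes (c).

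There is essentially no obstacle here — the argument is routine calculus. The one delicate point worth a sentence of care is the matching at $\mu = 0$: there $C^{\infty}$ regularity genuinely fails, since $g''(0) = 1 \neq 0$ forces $\mu \approx \tfrac{1}{2}x^{2}$ near the minimum, so that $X_{1}(\mu) \approx -\sqrt{2\mu}$ and $X_{2}(\mu) \approx \sqrt{2\mu}$ are merely continuous, not differentiable, at $\mu = 0$. This is precisely why the statement restricts the $C^{\infty}$ claim to $(0,\infty)$ and only asserts continuity on $[0,\infty]$; I would make sure the write-up keeps this distinction, and also keeps the domain restriction $x > -1$ visible throughout.
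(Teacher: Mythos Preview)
Your argument is correct and complete. The paper actually omits the proof of this lemma entirely (``which proof is omitted''), so there is no authorial approach to compare against; your treatment via the shape of $g(x)=x-\ln(1+x)$ is the natural one and covers all three items cleanly. One small point of contact: the paper does make the same remark you do about non-differentiability at $\mu=0$, writing that near the origin the equation is approximated by $x^{2}=\mu$ so that $X_{i}(\mu)\approx \pm\sqrt{\mu}$; your Taylor computation $g(x)=\tfrac{1}{2}x^{2}+O(x^{3})$ is in fact the sharper version, giving $X_{i}(\mu)\approx \pm\sqrt{2\mu}$, but the qualitative conclusion is identical.
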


Note, that neither $X_{1}$ nor $X_{2}$ are differentiable at $0$. In fact, if
we replace the function $x-\ln\left(  1+x\right)  =\mu$ by its Taylor
expansion near $x=0$, we find that the equation $x-\ln\left(  1+x\right)
=\mu$ is replaced by the equation $x^{2}=\mu$, so that $X_{1}\left(
\mu\right)  $ \ and $X_{2}\left(  \mu\right)  $ are approximated by
$-\sqrt{\mu}$ and $\sqrt{\mu}$ respectively when $\mu>0$ is small.

Introducing now the function $\mu$ defined by
\begin{align}
\label{eq: mudef}\mu\left(  k\right)  :=\delta v\left(  k\right)  - (\delta-
\rho) w\left(  k\right)  -\ln f\left(  k\right)
\end{align}
equation (\ref{eq:ODE1}) becomes%
\[
fv^{\prime}-1-\ln fv^{\prime}=\mu\label{a10}%
\]
and, using the notation of Lemma \ref{Lemma: variable X}, this equation
becomes
\begin{equation}
f\left(  k\right)  v^{\prime}\left(  k\right)  =1+X_{i}\left(  \mu\left(
k\right)  \right)  ,\ i=1,2 \label{a11}%
\end{equation}

Next, differentiating equation (\ref{eq: mudef}) gives
\[
f\mu^{\prime} =f\left(  \delta v^{\prime}\left(  k\right)  - (\delta-
\rho)w^{\prime}-\frac{f^{\prime}}{f}\right)
\]
and, using equation (\ref{eq:ODE2}) to eliminate $w^{\prime}$ from the above
equation yields
\begin{align}
f\mu^{\prime}  &  =\delta(1+X\left(  \mu\right)  )-f^{\prime}+\frac{1+X\left(
\mu\right)  }{X\left(  \mu\right)  }\left[  -\rho\left(  \pi+ \delta\right)
v+\left(  \pi+ \rho\right)  \mu+\left(  \pi+ \rho\right)  \ln f\right]
\label{a12}%
\end{align}
after replacing $fv^{\prime}$ by $1+X\left(  \mu\right)  $.

Our new system is now (\ref{a11}), (\ref{a12}) where $k$ is the independent
variable, the function $f\left(  k\right)  $ is given, the functions
$X_{1}\left(  \mu\right)  $ and $X_{2}\left(  \mu\right)  $ are defined in
Lemma \ref{Lemma: variable X}, and the unknown functions are $v\left(
k\right)  $ and $\mu\left(  k\right)  $. The right-hand sides of the equations
(\ref{a11}), (\ref{a12}) are function of $\left(  k,v,\mu\right)  $, which are
defined and $C^{\infty}$ for $k>0$ and $\mu>0$.

Suppose now we have a solution $\left(  v,\mu\right)  $, with
\begin{equation}
\mu\left(  \bar{k}\right)  =0. \label{eq: muboundary}%
\end{equation}

Then $X\left(  \mu\left(  \bar{k}\right)  \right)  =0$, so that
\[
v^{\prime}\left(  \bar{k}\right)  =1/f\left(  \bar{k}\right)
\]
by equation (\ref{a11}), and the right-hand side of equation (\ref{a12}%
)\ blows up unless
\[
v\left(  \bar{k}\right)  =\frac{\rho+\pi}{\rho\left(  \delta+\pi\right)  }\ln
f\left(  \bar{k}\right)
\]
which is precisely our boundary condition (\ref{BC1}). If both (\ref{BC1}) and
(\ref{eq: muboundary}) hold, the second boundary condition (\ref{BC2}) is
automatically implied by (\ref{eq: mudef}).

Let us rewrite our new system (omitting the index $i=1.2$)%
\begin{align*}
&  f\left(  k\right)  \frac{dv}{dk}=1+X\left(  \mu\right) \\
&  f\left(  k\right)  \frac{d\mu}{dk}=\delta(1+X\left(  \mu\right)
)-f^{\prime}+\frac{1+X\left(  \mu\right)  }{X\left(  \mu\right)  }\left[
-\rho\left(  \pi+ \delta\right)  v+\left(  \pi+ \rho\right)  \mu+\left(  \pi+
\rho\right)  \ln f\right]
\end{align*}
with the boundary conditions (\ref{BC1}) and (\ref{eq: muboundary}).

We now turn to a change of coordinate by taking $X\left(  \mu\left(  k\right)
\right)  =x$ as the independent variable instead of $k$ along the trajectory.
In other words, instead of looking for functions $v\left(  k\right)  $ and
$\mu\left(  k\right)  $ satisfying the equations (\ref{a11}), (\ref{a12}), we
will be looking for functions $\tilde{v}\left(  x\right)  $ and $\tilde
{k}\left(  x\right)  $ satisfying
\begin{align*}
\frac{d\tilde{v}}{dx}  &  =\frac{dv}{dk}\frac{d\tilde{k}}{dx}=\frac{1+x}%
{f}\frac{d\tilde{k}}{dx},\\
\frac{d\tilde{k}}{dx}  &  =\frac{dk}{d\mu}\frac{d\mu}{dx}=f \frac{d\mu}%
{dx}\left\{  \delta(1+x)-f^{\prime}+\frac{1+x }{x }\left[  -\rho\left(  \pi+
\delta\right)  v+\left(  \pi+ \rho\right)  \mu+\left(  \pi+ \rho\right)  \ln
f\right]  \right\}  ^{-1}\\
&  =f \frac{d\mu}{dx}\frac{x}{1+x}\left\{  \delta x-f^{\prime}\frac{x}{1+x}
-\rho\left(  \pi+ \delta\right)  v+\left(  \pi+ \rho\right)  \mu+\left(  \pi+
\rho\right)  \ln f \right\}  ^{-1}.
\end{align*}

Plugging in the relations $\mu=x-\ln\left(  1+x\right)  $, so that $\frac
{d\mu}{dx} =x\left(  1+x\right)  ^{-1}$, yields the new system%

\begin{align}
\frac{d\tilde{k}}{dx}  &  =f\left(  \tilde{k}\right)  \frac{x^{2}}{1+x}%
\frac{1}{D\left(  x,\tilde{k},\tilde{v}\right)  }\label{b2}\\
\frac{d\tilde{v}}{dx}  &  =x^{2}\frac{1}{D\left(  x,\tilde{k},\tilde
{v}\right)  } \label{b3}%
\end{align}
where the function $D$ is given by
\begin{align}
D\left(  x,k,v\right)  =  &  \left(  1+x\right)  \left[  -\rho\left(  \pi+
\delta\right)  v+\left(  \pi+ \delta+ \rho\right)  x \right] \nonumber\\
&  (1+x) \left(  \pi+ \delta\right)  \left[  \ln f\left(  k\right)
-\ln\left(  1+x\right)  \right]  -xf^{\prime}\left(  k\right)  . \label{b4}%
\end{align}

We pick a point $\bar{k}$ and we look for $C^{2}~$solutions $\left(  \tilde
{k}\left(  x\right)  ,\tilde{v}\left(  x\right)  \right)  $of the system
(\ref{b2}), (\ref{b3}), defined in a neigbourhood of $x=0\,$\ and satisfying:
\begin{align}
\tilde{k}\left(  0\right)   &  =\bar{k}\label{a18}\\
\tilde{v}\left(  0\right)   &  =\frac{\rho+\pi}{\rho\left(  \delta+\pi\right)
}\ln f\left(  \bar{k}\right)  :=\bar{v} \label{a29}%
\end{align}

We now introduce a new variable $s$, and we replace the system (\ref{b2}),
(\ref{b3}) with the following autonomous system
\begin{align}
\frac{dx}{ds}  &  =D\left(  x,k,v\right)  ,\ \ \ x\left(  0\right)
=0\label{a40}\\
\frac{dk}{ds}  &  =\frac{d\tilde{k}}{dx}\frac{dx}{ds}=f\left(  k\right)
\frac{x^{2}}{1+x},\ \ \ k\left(  0\right)  =\bar{k}\label{a41}\\
\frac{dv}{ds}  &  =\frac{d\tilde{v}}{dx}\frac{dx}{ds}=x^{2},\ \ \ v\left(
0\right)  =\bar{v} \label{a42}%
\end{align}
where there are now three unknown functions $\left(  x\left(  s\right)
,k\left(  s\right)  ,v\left(  s\right)  \right)  $, defined near $s=0$. Note
that $D$ is as smooth as $f^{\prime}$ in a neighbourhood of $\left(  0,\bar
{k},\bar{v}\right)  $ and that $D\left(  0,\bar{k},\bar{v}\right)  =0$.

Our problem is then to analyze the system (\ref{a40}),(\ref{a41}) and
(\ref{a42}) since it is related to to the initial system (\ref{eq:ODE1}%
),(\ref{eq:ODE2}) through a sequence of smooth change of variables and coordinates.

\subsection{Existence}

The linearized system near $\left(  0,\bar{k},\bar{v}\right)  $ is:%
\begin{equation}
\frac{d}{ds}\left(
\begin{array}
[c]{c}%
x\\
k\\
v
\end{array}
\right)  =A\left(
\begin{array}
[c]{c}%
x\\
k\\
v
\end{array}
\right)  \text{ } \label{a19}%
\end{equation}
with (all derivatives to be computed at $\left(  0,\bar{k},\bar{v}\right)
$):
\[
A:=\left(
\begin{array}
[c]{ccc}%
\frac{\partial D}{\partial x} & \frac{\partial D}{\partial k} & \frac{\partial
D}{\partial v}\\
0 & 0 & 0\\
0 & 0 & 0
\end{array}
\right)  =\left(
\begin{array}
[c]{ccc}%
\delta-f^{\prime}\left(  \bar{k}\right)  & (\pi+ \rho) \frac{f^{\prime}\left(
\bar{k}\right)  }{f\left(  \bar{k}\right)  } & - \rho(\pi+ \delta)\\
0 & 0 & 0\\
0 & 0 & 0
\end{array}
\right)
\]

The matrix $A$ has the eigenvalues $\left(  \delta-f^{\prime}\left(  \bar
{k}\right)  ,0,0\right)  $, and can obviously be put in diagonal form (in this
case, as a matrix with $1$ in the upper left corner, all the other
coefficients being $0$). To make the change of variables explicit, we note
that the eigenvector associated with the eigenvalue $\delta-f^{\prime}\left(
\bar{k}\right)  $ is%

\[
e=\left(
\begin{array}
[c]{c}%
1\\
0\\
0
\end{array}
\right)
\]
and we set \thinspace$E_{1}:=\mathrm{Span}\left\{  e\right\}  $. We also
consider the kernel of $A$, and denote it by $E_{0}$:%
\[
E_{0}:=\mathrm{Ker}A=\left\{  \left(
\begin{array}
[c]{c}%
x\\
k\\
v
\end{array}
\right)  \ |\ \left(  \delta-f^{\prime}\left(  \bar{k}\right)  \right)
x+(\pi+\rho)\frac{f^{\prime}\left(  \bar{k}\right)  }{f\left(  \bar{k}\right)
}k-\rho(\pi+\delta)v\right\}
\]

Both $E_{1}$ and $E_{0}$ are invariant subspaces of the linearized system
(\ref{a19}) with the corresponding eigenvalues being $\lambda_{1}%
:=\delta-f^{\prime}\left(  \bar{k}\right)  \neq0\,\ $\ and $0$, and
the operator $A$ is diagonal in any base $\left(
e_{1},e_{2},e_{3}\right)  $ with $e_{1}\in E_{1}$ and $\left(
e_{2},e_{3}\right)  \in E_{3}$. By the central manifold theorem (see
for instance \cite{Carr}, Theorem 1)\footnote{Theorem 1 in Carr
\cite{Carr} requires that the linearized matrix at the fixed point
be block diagonal. The matrix $A$ is not block diagonal but the
theorem can still be applied because the matrix $A$  can be
transformed into a block diagonal matrix with some appropriate
change of basis.}, there exists a map $h\left( k,v\right) $, defined
in a neighborhood $\mathcal{O}$ of $\left( \bar{k},\bar{v}\right)  $
such that
\[
h\left(  \bar{k},\bar{v}\right)  =0,\ \ \frac{\partial h}{\partial k}\left(
\bar{k},\bar{v}\right)  =0,\ \ \frac{\partial h}{\partial v}\left(  \bar
{k},\bar{v}\right)  =0
\]
and the manifold $\mathcal{M}$\ defined by:%
\[
\mathcal{M}:=\left\{  \left(  \alpha k+\beta v+h\left(  k,v\right)
,k,v\right)  \ |\ \left(  k,v\right)  \in\mathcal{O}\right\}
\]
is invariant by the flow associated with the equations (\ref{a40}),
(\ref{a41}), (\ref{a42}). The map $h$ and the central manifold $\mathcal{M}$
are as smooth as $f^{\prime}$: they are $C^{2}$, for instance, if $f$ is
$C^{3}$. Note that $\mathcal{M}$ is two-dimensional and tangent to $E_{0}$ at
$\left(  0,\bar{k},\bar{v}\right)  $. Note also that there is another
invariant manifold $N$, which is tangent to $E_{1}$ at $\left(  0,\bar{k}%
,\bar{v}\right)  $:\ it is one-dimensional, and it is stable if $\lambda
_{1}=\delta-f^{\prime}\left(  \bar{k}\right)  <0$ and unstable if $\lambda
_{1}=\delta-f^{\prime}\left(  \bar{k}\right)  >0$. Each of these invariant
manifolds gives a different type of solution to the system equations
(\ref{a40}),\ (\ref{a41}), (\ref{a42}).

We are interested in the solutions which lie on the central manifold
$\mathcal{M}$. They can be found by substituting $x=\alpha k+\beta v+h\left(
k,v\right)  $ in equations (\ref{a41}) and (\ref{a42}), yielding
\begin{align}
\frac{dk}{ds}  &  =f\left(  k\right)  \frac{\left[  \alpha k+\beta v+h\left(
k,v\right)  \right]  ^{2}}{1+\alpha k+\beta v+h\left(  k,v\right)
},\ \ \ k\left(  0\right)  =\bar{k}\label{b5}\\
\frac{dv}{ds}  &  =\left[  \alpha k+\beta v+h\left(  k,v\right)  \right]
^{2},\ \ \ v\left(  0\right)  =\bar{v} \label{b6}%
\end{align}
while $x$ is found by using the fact that $\mathcal{M}$ is invariant
\[
x\left(  s\right)  =\alpha k\left(  s\right)  +\beta v\left(  s\right)
+h\left(  k\left(  s\right)  ,v\left(  s\right)  \right)  .
\]
Eliminating the variable $s$ from (\ref{b5}) and (\ref{b6}), we get%
\[
\frac{dv}{dk}=\frac{f\left(  k\right)  }{1+\alpha k+\beta v+h\left(
k,v\right)  },\ \ v\left(  \bar{k}\right)  =\bar{v}%
\]

The solution of this initial-value problem is $v\left(  k\right)  =\psi\left(
k\right)  $, where $\psi\left(  \bar{k}\right)  =\bar{v}$ and $\psi$ is
$C^{2}$ if $h$ is $C^{2}$, that is, if $f$ is $C^{3}$(see above). Substituting
in $x=h\left(  k,v\right)  $, we get $x\left(  k\right)  =\alpha k+\beta
v+h\left(  k,\psi\left(  k\right)  \right)  $. Finally, $\mu\left(  k\right)
=x\left(  k\right)  -\ln\left(  1+x\left(  k\right)  \right)  $ is also
$C^{\infty}$, so we have found a smooth solution of equations (\ref{a11}) and
(\ref{a12}), as desired.

Differentiating equations (\ref{eq:ODE1}) and evaluating it at $k=\bar{k}$,
taking into account that $f\left(  \bar{k}\right)  v^{\prime}\left(  \bar
{k}\right)  =1$, yields the following%
\[
w^{\prime}\left(  \bar{k}\right)  =\frac{1}{(\delta- \rho)f\left(  \bar
{k}\right)  }\left(  \delta- f^{\prime}\left(  \bar{k}\right)  \right)
\]

\subsection{Proving the estimate}

It remains to prove that the strategy $\sigma\left(  k\right)  =1/v^{\prime
}\left(  k\right)  $ is convergent. Recall that this means that the solutions
of the equation
\[
\frac{dk}{dt}=f\left(  k\right)  -1/v^{\prime}\left(  k\right)  ,~~k(0)=k_{0}
\]
converge to $\bar{k}$ if $k_{0}$ i sufficiently close to $\bar k$. Since
$f\left(  \bar{k}\right)  =1/v^{\prime}\left(  \bar{k}\right)  $, $\bar{k}$ is
a fixed point of the dynamical system, and we want to show that it is an
attractor. This means that the linearized system at $\bar{k}$, namely
\[
\frac{dk}{dt}=\left(  f^{\prime}\left(  \bar{k}\right)  +\frac{v^{\prime
\prime}\left(  \bar{k}\right)  }{v^{\prime}\left(  \bar{k}\right)  ^{2}%
}\right)  k
\]
must have $k=\bar k$ as an attractor. In other words, we must have
\begin{equation}
f^{\prime}\left(  \bar{k}\right)  +\frac{v^{\prime\prime}\left(  \bar
{k}\right)  }{v^{\prime}\left(  \bar{k}\right)  ^{2}}<0 \label{e7}%
\end{equation}

To compute the left-hand side of (\ref{e7}), differentiate equation
(\ref{eq:ODE2}) at set $k=\bar{k}$. We get%

\[
\left(  f^{\prime}\left(  \bar{k}\right)  +\frac{v^{\prime\prime}\left(
\bar{k}\right)  }{v^{\prime}\left(  \bar{k}\right)  ^{2}}\right)  w^{\prime
}\left(  \bar{k}\right)  = -\pi v^{\prime}(\bar k) + (\rho+ \pi) w^{\prime
}(\bar k).
\]

To find $w^{\prime}(\bar{k})$, differentiate equation (\ref{eq:ODE1}) and
evaluate t at $k=\bar{k}$, taking into account that $f\left(  \bar{k}\right)
v^{\prime}\left(  \bar{k}\right)  =1$, to get
\[
w^{\prime}\left(  \bar{k}\right)  =\frac{1}{(\delta-\rho)f\left(  \bar
{k}\right)  }\left(  \delta-f^{\prime}\left(  \bar{k}\right)  \right)  .
\]
Hence%
\[
\left(  f^{\prime}\left(  \bar{k}\right)  +\frac{v^{\prime\prime}\left(
\bar{k}\right)  }{v^{\prime}\left(  \bar{k}\right)  ^{2}}\right)  w^{\prime
}\left(  \bar{k}\right)  =\frac{\rho(\delta+\pi)-(\rho+\pi)f^{\prime}(\bar
{k})}{f(\bar{k})(\delta-\rho)}.
\]
and, switching $w^{\prime}(\bar{k})$ from the left hand side to the right hand
side of the last equation gives
\begin{equation}
f^{\prime}\left(  \bar{k}\right)  +\frac{v^{\prime\prime}\left(  \bar
{k}\right)  }{v^{\prime}\left(  \bar{k}\right)  ^{2}}=\frac{\rho(\delta
+\pi)-(\rho+\pi)f^{\prime}(\bar{k})}{\delta-f^{\prime}\left(  \bar{k}\right)
} \label{eqr}%
\end{equation}
This will be negative if the numerator and denominator have opposite signs.
Both the numerator and the denominators of the right hand side of the last
equation are increasing functions of $\bar{k}$ and they change sign
respectively at $f^{\prime}(\bar{k})=\rho\frac{\delta+\pi}{\pi+\rho}$ and
$f^{\prime}(\bar{k})=\delta$. Since $\rho<\pi$, we have $\rho\frac{\delta+\pi
}{\pi+\rho}<\delta$ and therefore the only interval where $f^{\prime}\left(
\bar{k}\right)  +\frac{v^{\prime\prime}\left(  \bar{k}\right)  }{v^{\prime
}\left(  \bar{k}\right)  ^{2}}<0$ is the open interval $I=\left(  \rho
\frac{\delta+\pi}{\pi+\rho},\delta\right)  $.

\section{Proof of Theorem \ref{NL}}

\bigskip

We will need a preliminary result, the proof of which is quite obvious.

\begin{lemma}
Let $f\left(  x,y\right)  $ be a $C^{1}$ function of two variables such that:%
\begin{align*}
f\left(  x,x\right)   &  =\varphi\left(  x\right) \\
\frac{\partial f}{\partial x}\left(  x,x\right)   &  =\psi\left(  x\right)
\end{align*}
Then:%
\[
\frac{\partial f}{\partial y}\left(  x,x\right)  =\varphi^{\prime}\left(
x\right)  -\psi\left(  x\right)
\]

\end{lemma}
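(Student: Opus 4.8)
The plan is to apply the chain rule to the identity $f(x,x)=\varphi(x)$. Writing $g(x):=f(x,x)$, the map $x\mapsto(x,x)$ is $C^{1}$ and $f$ is $C^{1}$, so $g$ is differentiable and
\[
g'(x)=\frac{\partial f}{\partial x}(x,x)\cdot 1+\frac{\partial f}{\partial y}(x,x)\cdot 1 .
\]
Since $g(x)=\varphi(x)$ by hypothesis, the left-hand side is $\varphi'(x)$; and since $\frac{\partial f}{\partial x}(x,x)=\psi(x)$ by hypothesis, the first term on the right is $\psi(x)$. Solving for the remaining term gives
\[
\frac{\partial f}{\partial y}(x,x)=\varphi'(x)-\psi(x),
\]
which is the claim.

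There is essentially no obstacle here: the only point requiring a word of care is that $\varphi$ is automatically differentiable (it is the composition of the $C^{1}$ function $f$ with the smooth diagonal embedding), so writing $\varphi'(x)$ is legitimate, and that the two partial derivatives appearing are to be evaluated along the diagonal $y=x$, not treated as functions of one variable before restriction. Once the chain rule is invoked correctly, the identity is immediate.
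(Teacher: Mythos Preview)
Your proof is correct and is exactly the argument the paper has in mind: the paper states that ``the proof\ldots\ is quite obvious'' and gives no further detail, and the chain-rule computation you carry out is the intended (and essentially only) route.
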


To prove that $\bar\sigma$ is not l.r.p, we will establish that there exists
$\varepsilon>0$ such the inequality $(\ref{eq: lrp})$ is not satisfied for all
$k_{0} \in(\bar k - \varepsilon,\bar k + \varepsilon)$ and $k^{*} \in(\bar k ,
\bar k + \varepsilon) $. We will now introduce a new notation of the
equilibrium value as a function of the initial point $k_{0}$, as before, and
of the terminal point $\bar{k}$. More precisely, we define
\[
V\left(  k_{0},\bar{k}\right)  =\int_{0}^{\infty}h\left(  t\right)  u\left(
\bar{\sigma}\left(  \mathcal{K}\left(  \bar{\sigma};t,k_{0}\right)  \right)
\right)  dt
\]
where $h\left(  t\right)  $ is given by (\ref{IEOG}) and $\bar{\sigma}$ is an
equilibrium strategy converging to $\bar{k}$ \footnote{The function $V$ is
defined for any $k^{*} \in I$ by
\[
V(k_{0},k^{*}) = \int_{0}^{\infty}h\left(  t\right)  u\left(  \sigma
^{*}\left(  \mathcal{K}\left(  \sigma^{*};t,k_{0}\right)  \right)  \right)
dt
\]
where $\sigma^{*}$ is an equilibrium policy converging to $k^{*}$. Of course,
there may be multiple equilibria converging to $k^{*}$ and in order to define
properly $V( . , .)$ we need to make an a priori selection of a converging
equilibrium policy for each $k^{*} \in I$}. Assuming that $V$ is
differentiable with respect to $\bar k$, we can apply the preceding Lemma,
with
\[
V\left(  \bar{k},\bar{k}\right)  =\frac{\rho+\pi}{\rho\left(  \delta
+\pi\right)  }\ln f\left(  \bar{k}\right)  ,\ \frac{\partial V}{\partial
k_{0}}\left(  \bar{k},\bar{k}\right)  =\frac{1}{f\left(  \bar{k}\right)  }%
\]
and get%
\[
\frac{\partial V}{\partial\bar{k}}\left(  \bar{k},\bar{k}\right)  =\frac
{1}{f\left(  \bar{k}\right)  }\left(  \frac{\pi+ \rho}{\rho\left(  \pi+
\delta\right)  }f^{\prime}\left(  \bar{k}\right)  -1\right)
\]
which is positive on the whole allowable interval $I$. It follows that there
exists $\varepsilon>0$
\[
\frac{\partial V}{\partial\bar{k}}\left(  k_{0},\tilde{k}\right)  > 0
\]
for all $k_{0},\tilde{k}$ in the interval $(\bar k - \varepsilon, \bar k +
\varepsilon)$.

Now, consider any initial capital level $k_{0} \in(\bar k - \varepsilon, \bar
k + \varepsilon) $ and steady state capital level $k^{*} \in(\bar k , \bar k +
\varepsilon)$. By the mean value theorem, we have
\[
V(k_{0},k^{*})-V(k_{0}, \bar k) = \left(  k^{*} - \bar k\right)
\frac{\partial V}{\partial\bar{k}}\left(  k_{0},\tilde{k}\right)  >0
\]
for some $\tilde k \in(\bar k, k^{*})$ and where the inequality follows from
the fact that $\tilde k \in(\bar k - \varepsilon, \bar k + \varepsilon)$.

\bigskip

\bigskip

\appendix{}

\end{document}